\definecolor{MyGreen}{RGB}{29,162,55}
\DeclareMathAlphabet{\mathcal}{OMS}{cmsy}{m}{n}
\DeclareSymbolFont{largesymbols}{OMX}{cmex}{m}{n}
\newtheorem{theorem}{Theorem}[section]
\newtheorem{prop}[theorem]{\rm \textsc{Proposition}}
\newtheorem{lem}[theorem]{\rm \textsc{Lemma}}
\newtheorem{coro}[theorem]{\rm \textsc{Corollary}}
\newtheorem{rem}[theorem]{\rm \textsc{Remark}}
\newtheorem{thm}[theorem]{\it \textsc{Theorem}}
\newcommand{\N}{\mathbb{N}} 
\newcommand{\F}{\mathbb{F}_{q}} 
\newcommand{\FF}{{\mathbb{F}}}
\newcommand{\ra}{\longrightarrow}
\newcommand{\GL}{{{\rm GL}(n, \F)}} 
\newcommand{\SL}{{{\rm SL}(n, \F)}} 
\newcommand{\U}{{{\rm U}(n, \F)}} 
\newcommand{\W}{{mW\oplus d W^{*}}}
\newcommand{\WW}{{W\oplus W^{*}}}
\newcommand{\diag}{{\rm diag}}
\newcommand{\id}{{\rm id}}
\newcommand{\alp}{\alphaup}
\newcommand{\bet}{\betaup}
\newcommand{\gam}{\gammaup}
\newcommand{\sig}{\sigmaup}
\begin{document}
\setlength{\oddsidemargin}{0cm}
\setlength{\evensidemargin}{0cm}

\title{On invariant fields of vectors and covectors}

\author{Yin Chen}
\address{School of Mathematics and Statistics, Northeast Normal University, Changchun 130024, P.R. China}
\email{ychen@nenu.edu.cn}

\author{David L. Wehlau}
\address{Department of Mathematics and Computer Science, Royal Military College, Kingston, ON, K7K 5L0, Canada}
\email{wehlau@rmc.ca}

\date{\today}
\def\shorttitle{On invariant fields of vectors and covectors}

\begin{abstract}
Let ${\mathbb{F}_{q}}$ be the finite field of order $q$.
Let $G$ be one of the three groups ${\rm GL}(n, \mathbb{F}_q)$, ${\rm SL}(n, \mathbb{F}_q)$ or  ${\rm U}(n, \mathbb{F}_q)$
 and let $W$ be the standard $n$-dimensional representation of $G$.
For non-negative integers $m$ and $d$ we let $mW\oplus d W^*$ denote the representation of $G$ given
 by the direct sum of $m$ vectors and $d$ covectors.
 We exhibit a minimal set
 of homogenous invariant polynomials $\{\ell_1,\ell_{2},\dots,\ell_{(m+d)n}\}\subseteq \mathbb{F}_q[mW\oplus d W^*]^G$ such that
$\mathbb{F}_q(mW\oplus d W^*)^G=\mathbb{F}_q(\ell_1,\ell_2,\dots,\ell_{(m+d)n})$ for all cases except when $md=0$ and
$G={\rm GL}(n, \mathbb{F}_q)$ or ${\rm SL}(n, \mathbb{F}_q)$.
 \end{abstract}

\subjclass[2010]{13A50.}
\keywords{Invariant fields; vector invariants; rationality.}

\maketitle
\baselineskip=20pt



\section{Introduction}

\setcounter{equation}{0}
\renewcommand{\theequation}
{1.\arabic{equation}}

\setcounter{theorem}{0}
\renewcommand{\thetheorem}
{1.\arabic{theorem}}


Let $\F$ be the finite field with $q$ elements and $\GL$ be the general linear group of degree $n\geqslant 1$.
Suppose $W$ is the standard representation of $\GL$ and $W^{*}$ is the dual space of $W$.
We denote by $\SL$ the special linear group, and  by $\U$  the unipotent group of upper triangular matrices with 1's on the diagonal.
Write $\W$ to denote the direct sum of $m$ copies of $W$ and $d$ copies of $W^{*}$ for $m,d\in\N$ and let
$G$ be one of the groups $\GL$, $\SL$ or $\U$.

We are concerned with the invariant rings
$\FF[\W]^{G}=\{f\in \FF[\W]\mid \sig(f)=f,\forall \sig\in G\}$ and the invariant fields
 $\FF(\W)^{G}=\{f\in \FF(\W)\mid \sig(f)=f,\forall \sig\in G\}$.
 In particular, the purpose of this paper is to study the following question:
\begin{equation} \label{question}\tag{$\ast$}
\begin{split}
&\textit{Does there exist }\ell_1,\ell_2,\dots,\ell_{(m+d)n}\in \F[\W]^{G}\textit{ such that }\\
&\quad\quad\quad\quad\quad\F(\W)^{G}=\F(\ell_1,\ell_2,\dots,\ell_{(m+d)n})~?
 \end{split}
 \end{equation}
Here we will show that the answer to Question~(\ref{question}) is positive for $\F[\W]^G$ and $G \in \{\GL, \SL, \U\}$ in all cases except possibly for the cases where $G=\GL$ or $\SL$ and either $m$ or $d$ is 0.


An affirmative answer to Question~(\ref{question}) for $\F(W)^{\GL}$ and $\F(W)^{\SL}$ dates back to
Dickson \cite{Dic1911} who proved that both rings of invariants $\F[W]^{\GL}$ and $\F[W]^{\SL}$ are polynomial algebras.
 In 1975, Mui \cite{Mui1975} proved that the invariant ring
$\F[W]^{\U}$ is also a polynomial algebra, thus  showing that Question~(\ref{question}) has a positive answer for $\F(W)^{\U}$.

  These results are especially significant in light of the so called "No-name Lemma" which we now state, see for example Jensen-Ledet-Yui \cite[Section 1.1, page 22]{JLY2003}.

\begin{lem}[No-Name Lemma]
Let $G$ be a finite group acting faithfully on a finite dimensional vector space $V$ over a field $\FF$, and let $U$ be a faithful
$\FF(G)$-submodule of $V$. Then the extension of invariant fields $\FF(V)^{G}/\FF(U)^{G}$ is purely transcendental.
\end{lem}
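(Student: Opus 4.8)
The plan is to prove the lemma by \emph{Galois descent}, reducing the rationality statement to the triviality of a suitable Galois cohomology set. First I would set $L:=\FF(U)$, the field of rational functions on $U$, on which $G$ acts faithfully precisely because it acts faithfully on $U$. Thus $G$ is realized as a group of field automorphisms of $L$ of order $|G|$, and by Artin's theorem the extension $L/L^{G}$ is finite Galois with group $G$, where $L^{G}=\FF(U)^{G}$. This is the base field over which rationality is to be established.

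Next I would exhibit $\FF(V)$ as the rational function field of an $L$-vector space carrying a \emph{semilinear} $G$-action. Choosing a $G$-stable vector-space complement $W$ to $U$ (available in the situations of interest, where $U$ occurs as a direct summand of $V$), one has $\FF(V)=\FF(U\oplus W)=\mathrm{Frac}\bigl(\FF[U]\otimes_{\FF}\mathrm{Sym}(W^{*})\bigr)$. Localizing in the $\FF[U]$-factor first identifies this with $L(M)$, the rational function field of the $L$-vector space $M:=W^{*}\otimes_{\FF}L$, whose coordinate functions form an $L$-basis. Here $G$ acts diagonally, through the Galois action on $L$ and through the dual representation on $W^{*}$, so that $\sigma(\lambda\cdot v)=\sigma(\lambda)\,\sigma(v)$ for $\lambda\in L$ and $v\in M$; that is, $M$ is a semilinear $G$-module of $L$-dimension $r:=\dim_{\FF}W=\dim_{\FF}V-\dim_{\FF}U$.

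The crux is then the descent step. Applying Speiser's theorem (Galois descent for vector spaces, a form of Hilbert's Theorem~90 asserting that for $L/L^{G}$ Galois with group $G$ every finite-dimensional semilinear $G$-module $M$ satisfies $M\cong M^{G}\otimes_{L^{G}}L$), I would produce $G$-invariant elements $s_{1},\dots,s_{r}\in M$ forming an $L$-basis of $M$. Since these $s_{i}$ generate $L(M)=\FF(V)$ over $L$ and are fixed by $G$, the extension $\FF(V)/L^{G}(s_{1},\dots,s_{r})$ is Galois with group $G$, whence
\[
\FF(V)^{G}=\bigl(L(s_{1},\dots,s_{r})\bigr)^{G}=L^{G}(s_{1},\dots,s_{r})=\FF(U)^{G}(s_{1},\dots,s_{r}),
\]
with $s_{1},\dots,s_{r}$ algebraically independent over $L^{G}$; this is exactly the asserted purely transcendental extension. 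The step I expect to be the main obstacle is the descent itself: one must check that the ambient $G$-action genuinely restricts to a semilinear action on the $L$-span of the transverse coordinates, and then invoke the vanishing of $H^{1}(G,\mathrm{GL}_{r}(L))$, which is what guarantees the invariant transcendence basis $s_{1},\dots,s_{r}$. The faithfulness hypothesis on $U$ enters precisely here, ensuring that $L/L^{G}$ is Galois with the full group $G$ so that Speiser's theorem is applicable.
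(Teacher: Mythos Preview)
The paper does not prove this lemma; it is merely stated with a reference to Jensen--Ledet--Yui, so there is no proof in the paper to compare against. Your Galois-descent outline is the standard argument and is essentially correct \emph{once a $G$-stable complement exists}, but as written it does not establish the lemma in the generality stated.

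The gap is your parenthetical assumption ``choosing a $G$-stable vector-space complement $W$ to $U$''. The lemma only assumes that $U$ is a submodule of $V$, not a direct summand, and in the modular situation that is the whole point of the paper (finite groups over $\F$ with $p\mid |G|$) such complements typically fail to exist. So the step where you form the semilinear module $M=W^{*}\otimes_{\FF}L$ and apply Speiser's theorem is not available in general. The fix is small but essential: choose any linear (not $G$-equivariant) complement with coordinates $y_{1},\dots,y_{r}$, so that $\FF(V)=L(y_{1},\dots,y_{r})$ with $L=\FF(U)$. Since $U$ is $G$-stable, each $\sigma\in G$ acts by $\sigma\cdot y_{i}=\sum_{j}a_{ij}(\sigma)y_{j}+b_{i}(\sigma)$ with $a_{ij}(\sigma)\in\FF$ and $b_{i}(\sigma)\in L$; that is, the action on the tuple $(y_{1},\dots,y_{r})$ is \emph{affine} over $L$, not linear. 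Equivalently, the $L$-span of $1,y_{1},\dots,y_{r}$ is an $(r{+}1)$-dimensional semilinear $G$-module in which $L\cdot 1$ is a $G$-stable line with trivial action. Now apply descent twice: Speiser/Hilbert~90 gives $H^{1}(G,\mathrm{GL}_{r}(L))=1$, and the normal basis theorem (additive Hilbert~90) gives $H^{1}(G,L^{r})=0$; together these yield $H^{1}(G,\mathrm{Aff}_{r}(L))=1$, so the affine cocycle is a coboundary and there exist $G$-invariant $s_{1},\dots,s_{r}$ with $L(s_{1},\dots,s_{r})=L(y_{1},\dots,y_{r})$. From there your concluding degree count goes through unchanged. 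The faithfulness of $U$ is used exactly where you say, to ensure $L/L^{G}$ is Galois with group $G$ so that both vanishing results apply.
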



  Thus the results of Dickson and Mui imply that each of the fields $\F(\W)^{G}$ is
purely transcendental over the base field $\F$ for each of the groups $G=\GL$, $\SL$ or $\U$.

It is well-known that for any finite $p$-group $P$ and any modular representation  $V$, the invariant field is always purely transcendental, see Miyata \cite[Theorem 1]{Miy1971} or Kang \cite[Theorem]{Kan2006}. In 2007, Campbell-Chuai \cite[Theorem 2.4]{CC2007} gave an inductive method to find a polynomial generating set for the invariant field of $P$.  In particular this showed that $\F(\W)^{\U}$
can be generated by $(m+d)n$ homogeneous invariant polynomials. However, Campbell-Chuai \cite{CC2007} did not give any explicit polynomial generating sets for $\F(\W)^{\U}$ and for all $m,d,n$.

For a special  case where $n=2$ and $q=p$ is prime,
 Richman \cite[Corollary, page 38]{Ric1990} exhibited a polynomial generating set for
 $\mathbb{F}_{p}(mW)^{\mathbb{U}(2,\mathbb{F}_{p})}$, which is used to study the invariant ring $\mathbb{F}_{p}[mW]^{\mathbb{U}(2,\mathbb{F}_{p})}$, a classical object of study in modular invariant theory.

The structure of $\mathbb{F}_{p}[mW]^{\mathbb{U}(2,\mathbb{F}_{p})}$ is more complicated than that of $\mathbb{F}_{p}(mW)^{\mathbb{U}(2,\mathbb{F}_{p})}$, see for example, Campbell-Hughes \cite{CH1997}, Campbell-Shank-Wehlau \cite{CSW2010}, Wehlau \cite{Weh2013}, and Campbell-Wehlau \cite{CW2014}.


In 2011, Bonnaf\'e-Kemper \cite{BK2011} initiated a study of the modular invariant ring of a vector and a covector, showing that the invariant ring $\F[W\oplus W^{*}]^{\U}$ is a complete intersection algebra as well as raising  a conjecture on generating sets for
of $\F[W\oplus W^{*}]^{\GL}$. Recently, Chen-Wehlau \cite{CW2015} confirmed this conjecture of Bonnaf\'e-Kemper's.
As the first step of the proof of this conjecture, we proved that $\F(W\oplus W^{*})^{\GL}$ can be generated by $2n$ homogeneous invariant polynomials from $\F[W\oplus W^{*}]^{\GL}$, answering the Question~(\ref{question}) affirmatively in this special case.

The purpose of this paper is to generalize the above results to the invariant fields of any number of vectors and covectors.
Our main result is the following Theorem~\ref{mt} whose proof will be separated into three parts below: Theorems~\ref{GL},~\ref{SL}, and \ref{UU}.

\begin{thm} \label{mt}
For all $m,d\in \mathbb{N}^{+}$ and $G\in \{\GL,\SL,\U\}$ with standard representation $W$, there exist $(m+d)n$ invariant polynomials
$\ell_{1},\ell_{2},\dots,\ell_{(m+d)n}\in \F[\W]^{G}$ such that
$\F(\W)^{G}=\F(\ell_{1},\ell_{2},\dots,\ell_{(m+d)n})$.
\end{thm}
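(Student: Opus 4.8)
The plan is to establish the three constituent cases $G=\GL$, $G=\SL$ and $G=\U$ by one uniform construction, the only group--dependent input being a known polynomial generating set of the single--vector field $\F(W)^G$. Since $\dim_\F(\W)=(m+d)n$ and $G$ is finite, $\F(\W)/\F(\W)^G$ is algebraic, so $\F(\W)^G$ has transcendence degree $(m+d)n$; it therefore suffices to produce $(m+d)n$ homogeneous polynomial invariants that are algebraically independent and generate the field, and such a set is automatically minimal. I would first note that $G$ acts generically freely already on a single vector: if the coordinates of $v$ are algebraically independent over $\F$ then $(g-1)v=0$ with $g\in G\subseteq\GL$ forces $g=1$, whence $[\F(\W):\F(\W)^G]=|G|$. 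This degree identity will drive the generation step.

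The construction exploits that the Frobenius twist $v\mapsto v^{[i]}:=(v_1^{q^i},\dots,v_n^{q^i})$, and likewise $\phi\mapsto\phi^{[i]}$, is $G$--equivariant, because the entries of any $g\in G$ lie in $\F$ and hence commute with the $q$--th power map; consequently each pairing $\langle\phi,v^{[i]}\rangle$ and $\langle\phi^{[i]},v\rangle$ is a $\GL$--invariant polynomial, a fortiori $G$--invariant. Writing $v^{(1)},\dots,v^{(m)}$ and $\phi^{(1)},\dots,\phi^{(d)}$ for the given vectors and covectors, I would take for $\ell_1,\dots,\ell_{(m+d)n}$ the union of three families: a polynomial generating set $c_0,\dots,c_{n-1}$ of $\F(W)^G$ evaluated at $v^{(1)}$ (the Dickson invariants for $\GL$, their $\SL$--analogues for $\SL$, the Mui invariants for $\U$); the $dn$ pairings $M_{j,i}:=\langle\phi^{(j)},(v^{(1)})^{[i]}\rangle$ for $1\le j\le d$, $0\le i\le n-1$; and the $(m-1)n$ pairings $N_{k,i}:=\langle(\phi^{(1)})^{[i]},v^{(k)}\rangle$ for $2\le k\le m$, $0\le i\le n-1$. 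These are homogeneous polynomial invariants, and they number $n+dn+(m-1)n=(m+d)n$.

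For algebraic independence I would order the coordinates as those of $v^{(1)}$, then $\phi^{(1)},\dots,\phi^{(d)}$, then $v^{(2)},\dots,v^{(m)}$, and compute the Jacobian in the matching block order; it is block--triangular, with diagonal blocks the Dickson/Mui Jacobian of $c_0,\dots,c_{n-1}$ and the Moore matrices $\big[(v^{(1)}_l)^{q^i}\big]_{i,l}$ (repeated $d$ times) and $\big[(\phi^{(1)}_l)^{q^i}\big]_{i,l}$ (repeated $m-1$ times), each nonsingular precisely when the relevant coordinates are $\F$--linearly independent, hence on a dense open set. For generation, set $L:=\F(\ell_1,\dots,\ell_{(m+d)n})\subseteq\F(\W)^G$. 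Inverting the Moore matrix of $v^{(1)}$ expresses every $\phi^{(j)}$ through the $M_{j,\bullet}$, so each $\phi^{(j)}\in L(v^{(1)})$; in particular $\phi^{(1)}\in L(v^{(1)})$, and then inverting the Moore matrix of $\phi^{(1)}$ expresses every remaining $v^{(k)}$ through the $N_{k,\bullet}$, so $v^{(k)}\in L(v^{(1)})$ too. Hence $\F(\W)=L(v^{(1)})$, and as $v^{(1)}$ is algebraic over $\F(c_0,\dots,c_{n-1})\subseteq L$ of degree $[\F(W):\F(W)^G]=|G|$, one gets $[\F(\W):L]\le|G|$. Together with $[\F(\W):\F(\W)^G]=|G|$ and $L\subseteq\F(\W)^G$, the tower law yields $[\F(\W)^G:L]=1$, i.e.\ $L=\F(\W)^G$.

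The step I expect to be most delicate is conceptual rather than computational: the No-Name Lemma already guarantees that $\F(\W)^G$ is purely transcendental, but its transcendence basis is a priori only rational, and the real content is that the Moore--matrix pairings furnish an \emph{explicit polynomial} coordinatization of all but one vector and one covector. This is exactly where the hypotheses $m\ge1$ and $d\ge1$ are both used: the frame $\{(v^{(1)})^{[i]}\}$ coming from one vector captures the covectors, while the co--frame $\{(\phi^{(1)})^{[i]}\}$ coming from one covector captures the remaining vectors. When $md=0$ one of these frames disappears and no polynomial replacement is evident, which is precisely the case the theorem must exclude for $\GL$ and $\SL$. The residual verifications---nonsingularity of the Moore matrices on the generic locus, and that the quoted $\SL$-- and Mui--invariants really generate $\F(W)^G$ and cut out an extension of degree $|G|$---are routine consequences of the theorems of Dickson and Mui.
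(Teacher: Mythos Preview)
Your proof is correct and takes a genuinely different route from the paper's. The paper first settles $m=d=1$ by invoking the explicit relations among $u_i$, $f_i$, $c_i$ catalogued by Bonnaf\'e--Kemper and Chen--Wehlau (Propositions~2.1, 2.2, 2.4), then proves a Galois lemma via the product group $\overline{G}=G^{m+d}$ (Proposition~3.1) to obtain a large generating set (Corollary~3.2), and finally cuts that set down to size $(m+d)n$ using further explicit relations $(T_i)$, $(T_i^*)$, $(R_k)$ inside each $\F[W_j\oplus W_1^*]$. Your single inequality $[\F(\W):L]\le|G|$---obtained by Moore-matrix inversion to show $\F(\W)=L(v^{(1)})$ and then using $[\F(W):\F(W)^G]=|G|$---bypasses both stages and works uniformly for all three groups, with the only group-dependent input being Dickson's or Mui's theorem that $\F[W]^G$ is polynomial on $n$ generators. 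Your generating set also differs from the paper's: you keep all $n$ Dickson (resp.\ Mui) invariants of $v^{(1)}$ and pair only with non-negative Frobenius twists, whereas the paper keeps a single $c_{10}$ (resp.\ $d_{1n}$, resp.\ a mix of $f_{ji}$'s) and uses twists $u_{1,i}$ of both signs; for $\GL$ your choice corresponds to the third form in Proposition~2.1 rather than the first. What your approach buys is brevity and uniformity across $\GL$, $\SL$, $\U$; what the paper's approach buys is an explicit web of relations already in hand from the companion ring-of-invariants problem. One minor remark: your Jacobian computation, while correct over the perfect field $\F$, is logically redundant---once the degree argument yields $L=\F(\W)^G$, algebraic independence of the $(m+d)n$ generators follows from the transcendence degree.
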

  Furthermore, we will explicitly exhibit such invariant polynomials  $\ell_{1},\ell_{2},\dots,\ell_{(m+d)n}$ generating the field of invariants in each case.

\begin{rem}
{\rm
By Campbell-Chuai  \cite[Theorem 2.4]{CC2007}, we see that Question~(\ref{question}) for $\F(mW)^{\U}$ has an affirmative answer.
}
\end{rem}

\section{Special Example: $m=d=1$}

\setcounter{equation}{0}
\renewcommand{\theequation}
{2.\arabic{equation}}

\setcounter{theorem}{0}
\renewcommand{\thetheorem}
{2.\arabic{theorem}}

In this section we prove  Theorem~\ref{mt} in the particular case $m=d=1$.  In the next section we will
give proofs for the general cases using the result for this special case.

\subsection{Generators of the invariant fields $\F(\WW)^{\GL}$ and $\F(\WW)^{\SL}$}

We choose  $\{x_{1},x_{2},\dots,x_{n}\}$ as a basis of $W$ and $\{y_{1},y_{2},\dots,y_{n}\}$ as the dual basis in $W^{*}$. Then
$$\F[\WW]=\F[x_{1},x_{2},\dots,x_{n},y_{1},y_{2},\dots,y_{n}]$$ is a polynomial algebra of Krull dimension $2n$, which can be endowed with an involution which is an algebra endomorphism
$$*:\F[\WW]\ra\F[\WW], ~~f\mapsto f^{*}$$  determined by
$x_{1}\mapsto y_{n}, x_{2}\mapsto y_{n-1},\dots,x_{n-1}\mapsto y_{2},x_{n}\mapsto y_{1}.$
There  are also  two $\F$-algebra homomorphisms:
\begin{eqnarray*}
F:\F[\WW]\ra\F[\WW] & \textrm{by} & x_{i}\mapsto x_{i}^{q}, y_{i}\mapsto y_{i} \\
F^{*}:\F[\WW]\ra\F[\WW] & \textrm{by} & x_{i}\mapsto x_{i}, y_{i}\mapsto y_{i}^{q}.
\end{eqnarray*}
We have a natural invariant $u_0$ in $\F[\WW]^{\GL}$ corresponding to the natural pairing between $W$ and $W^*$:
\begin{equation*}
\label{ }
u_{0}:=x_{1}y_{1}+x_{2}y_{2}+\cdots+x_{n}y_{n}.
\end{equation*}
For $i\in\N^{+}$, we define
\begin{eqnarray*}
u_{i} & := & F^{i}(u_{0}) = x_{1}^{q^{i}}y_{1}+x_{2}^{q^{i}}y_{2}+\cdots+x_{n}^{q^{i}}y_{n}\\
u_{-i} & := & (F^{*})^{i}(u_{0})=x_{1}y_{1}^{q^{i}}+x_{2}y_{2}^{q^{i}}+\cdots+x_{n}y_{n}^{q^{i}}
\end{eqnarray*}
which are also $\GL$-invariants, since $F$ and $F^{*}$ commute with the action of $\GL$. We observe that
$u_{-i}^{*}=u_{i}$ for all $i\in\N$.

Suppose $\F[W]^{\GL}=\F[c_{0},c_{1},\dots,c_{n-1}]$ and $\F[W]^{\SL}=\F[d_{n},c_{1},\dots,c_{n-1}]$ denote the Dickson invariants.
Note that $c_{0}=d_{n}^{q-1}$. We write $\F[W]^{\U}=\F[f_{1},f_{2},\dots,f_{n}]$ for the Mui invariants.
See Bonnaf\'e-Kemper \cite{BK2011} or Chen-Wehlau \cite{CW2015}  for details.

\begin{prop}[Chen-Wehlau  \cite{CW2015} (Proposition~5)]\label{pGL}
\begin{eqnarray*}
\F(\WW)^{\GL}&=&\F(c_{0},u_{1-n},u_{2-n},\dots,u_{0},u_{1},\dots,u_{n-1})\\
&=&\F(c_{0}^{*},u_{1-n},u_{2-n},\dots,u_{0},u_{1},\dots,u_{n-1})\\
&=&\F(c_{0},c_{1},c_2,\dots,c_{n-1},u_{0},u_{1},\dots,u_{n-1})\ .
\end{eqnarray*}
\end{prop}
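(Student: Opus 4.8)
The plan is to use that $G=\GL$ is a finite group acting faithfully, so for $K:=\F(\WW)^{\GL}$ Artin's theorem gives $[\F(\WW):K]=|\GL|$, and that the invariant field is the fraction field of the invariant ring; in particular Dickson's theorem yields $\F(W)^{\GL}=\F(c_0,c_1,\dots,c_{n-1})$ with $[\F(W):\F(c_0,\dots,c_{n-1})]=|\GL|$. Each of the three candidate fields consists of invariants, hence is contained in $K$, so the whole statement reduces to the degree principle: a subfield $L\subseteq K$ with $[\F(\WW):L]\le|\GL|$ must equal $K$, since $[\F(\WW):L]=|\GL|\cdot[K:L]$. Throughout, the one standing computation I will rely on is that the coefficient matrix $(x_j^{q^{i}})_{0\le i\le n-1,\,1\le j\le n}$ of the linear system $u_i=\sum_j x_j^{q^{i}}y_j$ is a Moore matrix, hence invertible because $x_1,\dots,x_n$ are $\F$-linearly independent.

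I would prove the third equality first, as it is the cleanest and will serve the other two. By Cramer's rule applied to the Moore system, each $y_j$ lies in $\F(x_1,\dots,x_n,u_0,\dots,u_{n-1})$, so $\F(\WW)=\F(x_1,\dots,x_n,u_0,\dots,u_{n-1})$; moreover $u_0,\dots,u_{n-1}$ are an invertible $\F(x_1,\dots,x_n)$-linear image of the transcendentals $y_1,\dots,y_n$, hence are algebraically independent over $E:=\F(x_1,\dots,x_n)$. Using the standard fact that $[E(t_1,\dots,t_n):B(t_1,\dots,t_n)]=[E:B]$ for a finite extension $E/B$ and transcendentals $t_1,\dots,t_n$ over $E$, with $B:=\F(c_0,\dots,c_{n-1})$ and $t_i:=u_i$, I obtain $[\F(\WW):\F(c_0,\dots,c_{n-1},u_0,\dots,u_{n-1})]=[E:B]=|\GL|$, so the third field equals $K$.

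For the first equality I would recover the missing invariants $c_1,\dots,c_{n-1}$ inside $L_1:=\F(c_0,u_{1-n},\dots,u_{n-1})$, which then forces $L_1\supseteq K$ and hence $L_1=K$. The tool is the Cauchy--Binet identity $\det\big(u_{s_a,t_b}\big)=\det\big(x_j^{q^{t_b}}\big)\cdot\det\big(y_j^{q^{s_a}}\big)$ for the two-index invariants $u_{s,t}:=\sum_j x_j^{q^{t}}y_j^{q^{s}}$; since $u_{s,t}=u_{t-s}^{\,q^{\min(s,t)}}$, every such entry with $|t-s|\le n-1$ already lies in $L_1$. Choosing $s_a,t_b\in\{0,\dots,n-1\}$ gives $\Delta_x\Delta_y\in L_1$, where $\Delta_x=\det(x_j^{q^{i}})_{0\le i\le n-1}$ and $\Delta_y$ is its $y$-analogue. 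Choosing instead $t_b$ to range over $\{0,\dots,n\}\setminus\{k\}$ and invoking the determinantal formula $\det(x_j^{q^{e}})_{e\in\{0,\dots,n\}\setminus\{k\}}=\pm c_k\Delta_x$, which follows from the additive Dickson relation $x_j^{q^{n}}=\sum_{l}c_l x_j^{q^{l}}$, I find that the only matrix entry lying outside $L_1$ is $u_n$; expanding the determinant along that entry and substituting $u_n=\sum_{l=0}^{n-1}c_l u_l$ produces, for each $k\in\{1,\dots,n-1\}$, one $L_1$-linear equation in the unknowns $c_1,\dots,c_{n-1}$. Solving the resulting $(n-1)\times(n-1)$ system places $c_1,\dots,c_{n-1}$ in $L_1$.

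Finally, the second equality I would obtain by symmetry. The involution $*$ is an algebra automorphism of $\F[\WW]$ that intertwines the $\GL$-action with its contragredient (transpose-inverse being an automorphism of $\GL$), so it stabilizes $K$; and it sends the generating set $\{c_0,u_{1-n},\dots,u_{n-1}\}$ to $\{c_0^{*},u_{n-1},\dots,u_{1-n}\}$, which is precisely the second generating set. Applying $*$ to the already-proved equality $L_1=K$ then yields $\F(c_0^{*},u_{1-n},\dots,u_{n-1})=K$. I expect the genuine obstacle to be the third paragraph: confirming that the linear system for $c_1,\dots,c_{n-1}$ is nonsingular over $L_1$ (equivalently, that its coefficient determinant is a nonzero rational function) while correctly tracking the signs and the Frobenius twists $u_{t-s}^{\,q^{\min(s,t)}}$ that appear when the Cauchy--Binet minors are rewritten through the chosen generators.
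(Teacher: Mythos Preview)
This proposition is not proved in the present paper: it is quoted verbatim as Proposition~5 of \cite{CW2015}, so there is no ``paper's own proof'' here to compare against. What can be said is that your outline is sound and would yield an independent proof. The argument for the third equality via the Moore matrix and base change of transcendentals is correct and efficient, and the passage from the first equality to the second via the involution $*$ is exactly the mechanism the paper itself invokes elsewhere (e.g.\ in the proof of Proposition~\ref{pU} and in Remark~2.4).

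For the first equality your Cauchy--Binet strategy is the right idea, but the obstacle you single out---nonsingularity of the $(n-1)\times(n-1)$ system in $c_1,\dots,c_{n-1}$---can be sidestepped rather than confronted. Run your construction first with $k=0$: the identity
\[
A_0+B_0\,u_n=\pm\,c_0\,(\Delta_x\Delta_y),\qquad A_0,B_0,\Delta_x\Delta_y\in L_1,
\]
already has $c_0\in L_1$ on the right, so it solves for $u_n\in L_1$ as soon as $B_0\neq 0$. Here $B_0$ is, up to sign and a Frobenius power, $\det(u_{s,t})_{0\le s,t\le n-2}$; by Cauchy--Binet this equals $\sum_{|S|=n-1}\det(y_j^{q^s})_{j\in S}\cdot\det(x_j^{q^t})_{j\in S}$, and the summand with $S=\{1,\dots,n-1\}$ is the only one not involving $x_n$ or $y_n$, hence cannot be cancelled. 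Once $u_n\in L_1$, each remaining identity $A_k+B_k u_n=\pm c_k(\Delta_x\Delta_y)$ gives $c_k\in L_1$ directly for $k=1,\dots,n-1$, with no linear system to invert. This removes the only gap you flagged; the rest of your plan goes through as written.
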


We can use the above result to compute the corresponding field for the action $\SL$ on $\WW$.

\begin{prop}\label{pSL}
\begin{eqnarray*}
\F(\WW)^{\SL}&=&\F(d_{n},u_{1-n},u_{2-n},\dots,u_{0},u_{1},\dots,u_{n-1})\\
&=&\F(d_{n}^{*},u_{1-n},u_{2-n},\dots,u_{0},u_{1},\dots,u_{n-1})\\
&=&\F(d_{n},c_{1},\dots,c_{n-1},u_{0},u_{1},\dots,u_{n-1})\ .
\end{eqnarray*}
\end{prop}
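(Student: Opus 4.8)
The plan is to exhibit $\F(\WW)^{\SL}$ as the single‑generator extension $\F(\WW)^{\GL}(d_n)$ of the $\GL$‑invariant field, and then to transcribe each of the three descriptions in Proposition~\ref{pGL} simply by adjoining $d_n$ and discarding the now‑redundant invariant $c_{0}=d_n^{q-1}$.

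First I would assemble the two facts that make the extension visible. Since the $\SL$‑action on $\WW$ restricts on $W$ to the standard $\SL$‑action, the Dickson invariant $d_n$ lies in $\F[W]^{\SL}\subseteq\F[\WW]^{\SL}$, and the relation $c_{0}=d_n^{q-1}$ presents $d_n$ as a $(q-1)$‑th root of the $\GL$‑invariant $c_{0}$. On the other hand, both $\GL$ and $\SL$ act faithfully on the rational function field $\F(\WW)$ (because $W$, hence $\WW$, is a faithful representation), so Artin's theorem gives $[\F(\WW):\F(\WW)^{\GL}]=|\GL|$ and $[\F(\WW):\F(\WW)^{\SL}]=|\SL|$, whence
\[
[\F(\WW)^{\SL}:\F(\WW)^{\GL}]=\frac{|\GL|}{|\SL|}=q-1.
\]
Next I would compute the degree of $d_n$ over $\F(\WW)^{\GL}$. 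Writing $\sig\cdot d_n=\det(\sig)^{-1}d_n$ for $\sig\in\GL$ (the top Dickson invariant is a relative invariant for the determinant character, whose kernel is exactly $\SL$), the Galois conjugates of $d_n$ under $\mathrm{Gal}(\F(\WW)/\F(\WW)^{\GL})\cong\GL$ are precisely the $q-1$ pairwise distinct elements $\{\alpha d_n:\alpha\in\F^{*}\}$. Hence the minimal polynomial of $d_n$ over $\F(\WW)^{\GL}$ is $\prod_{\alpha\in\F^{*}}(T-\alpha d_n)=T^{q-1}-c_{0}$, so $[\F(\WW)^{\GL}(d_n):\F(\WW)^{\GL}]=q-1$. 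Comparing this with the chain $\F(\WW)^{\GL}\subseteq\F(\WW)^{\GL}(d_n)\subseteq\F(\WW)^{\SL}$ and the degree count above forces $\F(\WW)^{\SL}=\F(\WW)^{\GL}(d_n)$.

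Finally I would read off the three equalities. Adjoining $d_n$ to the first (respectively third) description of $\F(\WW)^{\GL}$ in Proposition~\ref{pGL} and deleting $c_{0}=d_n^{q-1}$ yields the first (respectively third) line of the claim. For the middle line I would note that $*$ is induced by a linear automorphism of $\WW$ that normalizes the image of $\GL$ and carries $\SL$ onto itself, so $*$ preserves $\F(\WW)^{\SL}$; applying $*$ to the first equality, using $u_{-i}^{*}=u_i$ (so that $*$ merely permutes $\{u_{1-n},\dots,u_{n-1}\}$) while $d_n\mapsto d_n^{*}$, gives the second line.

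I expect the only points needing genuine care to be (i) the transformation law $\sig\cdot d_n=\det(\sig)^{-1}d_n$ in the normalization of this paper, which is what guarantees that $d_n$ attains its full complement of $q-1$ conjugates and hence generates the entire extension rather than a proper subextension; and (ii) checking that $*$ really stabilizes $\F(\WW)^{\SL}$, i.e.\ that the automorphism of $\GL$ obtained by conjugating the action by $*$ maps $\SL$ onto $\SL$. Once these two structural points are secured, everything else is a formal degree computation.
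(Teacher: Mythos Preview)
Your proposal is correct and follows essentially the same approach as the paper: both argue that $\F(\WW)^{\SL}=\F(\WW)^{\GL}(d_n)$ by comparing the degree $[\F(\WW)^{\SL}:\F(\WW)^{\GL}]=[\GL:\SL]=q-1$ with $[\F(\WW)^{\GL}(d_n):\F(\WW)^{\GL}]=q-1$ (the latter coming from the minimal polynomial $X^{q-1}-c_0$), and then read off each description from the corresponding line of Proposition~\ref{pGL}. Your treatment is somewhat more detailed---you justify the minimal polynomial via the semi-invariance of $d_n$ and derive the second line via the involution $*$ rather than repeating the degree argument with $d_n^*$---but these are elaborations of the same underlying idea rather than a different route.
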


\begin{proof}
  Put $K := \F(c_0,u_{1-n},u_{2-n},\dots,u_{0},u_{1},\dots,u_{n-1})=\F(\WW)^{\GL}$
  and $L := K(d_n)$.
Since $d_n$ is $\SL$-invariant we have $K(d_n) \subseteq M:=\F(\WW)^{\SL}$.
Thus $$K=\F(\WW)^{\GL} \subseteq L=K(d_n) \subseteq M.$$

  Since $c_0 = d_n^{q-1}$ we have
$L= K(d_n) = \F(d_{n},u_{1-n},u_{2-n},\dots,u_{0},u_{1},\dots,u_{n-1})$.
  Furthermore, it is clear that the minimal polynomial of
$d_n$ over $\F[\WW]^{\GL}$ is $X^{q-1} - c_0$.  Thus $[L:K]=q-1=[\GL:\SL]=[M:K]$.
Therefore $L=M$ proving the first equality.

  The other two equalities follow similarly.
\end{proof}

\begin{rem}{\rm
  In the proof of Theorem~\ref{GL} below  Equation~(\ref{big equation}) explicitly shows that
  $d_n d_n^* \in \F[u_{1-n},u_{2-n},\dots,u_{0},u_{1},\dots,u_{n-1}]$.}
\end{rem}

\begin{rem}{\rm
  It is also possible to prove Proposition~\ref{pSL} using the proof of
Proposition~\ref{pGL} from  \cite{CW2015} but replacing $c_0$ by $d_n$ and
 $c_0^*$ by $d_n^*$ throughout.}
\end{rem}

\subsection{$\F(\WW)^{\U}$}

The rest of this section is devoted to finding a minimal generating set for $\F(\WW)^{\U}$.

\begin{prop}\label{pU}\ \\
\begin{enumerate}
  \item  For $n=1$, $\F(\WW)^{\U}=\F(\WW)$. 
  \item  For $n=2$, $\F(\WW)^{\U}=\F(f_{1},f_{1}^{*},f_{2}^{*},u_{0})=\F(f_{1},f_{1}^{*},f_{2},u_{0})$. 
  \item\label{third case of pU} For any $n\geqslant 3$, the invariant field is given by
\begin{eqnarray*}
\F(\WW)^{\U}& = & \F(f_{1},f_{2},f_{1}^{*},f_{2}^{*},\dots,f_{n-2}^{*},u_{1},u_{0},u_{-1},\dots,u_{2-n}) \\
 & = &  \F(f_{1}^{*},f_{2}^{*},f_{1},f_{2},\dots,f_{n-2},u_{-1},u_{0},u_{1},\dots,u_{n-2})\ .
\end{eqnarray*}
\end{enumerate}
\end{prop}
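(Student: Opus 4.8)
The plan is to prove both displayed equalities by a single degree computation, organized as an explicit tower of field extensions. Write $E$ for the field on the right-hand side of the first equality. Since the $f_i$ lie in $\F[W]^{\U}$, the $f_j^*$ lie in $\F[\WW]^{\U}$ (the involution $*$ maps $\F[\WW]^{\U}$ to itself), and each $u_k$ is even $\GL$-invariant, we have $E\subseteq\F(\WW)^{\U}$. By Artin's theorem $\F(\WW)/\F(\WW)^{\U}$ is Galois with group $\U$, so $[\F(\WW):\F(\WW)^{\U}]=|\U|=q^{\binom{n}{2}}$, whence $[\F(\WW):E]\geqslant q^{\binom{n}{2}}$. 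It therefore suffices to exhibit a chain of simple extensions from $E$ up to $\F(\WW)$ whose degrees multiply to \emph{at most} $q^{\binom{n}{2}}$: equality will then be forced throughout, giving $E=\F(\WW)^{\U}$. The second equality then follows by applying $*$, which interchanges the two generating sets (note $u_k^*=u_{-k}$). The case $n=1$ is trivial since $\U$ is then the trivial group, and for $n=2$ one adjoins $y_1$ to $E=\F(x_1,y_2,f_2^*,u_0)$ through the degree-$q$ linearized polynomial $f_2^*$ and then recovers $x_2=(u_0-x_1y_1)/y_2$, so $[\F(\WW):E]\leqslant q=|\U|$. Assume henceforth $n\geqslant 3$.

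First I would build the ``pure'' part of the tower from the Mui invariants. Here $f_1=x_1$ and $f_1^*=y_n$ already lie in $E$, while $f_2$ and each $f_i^*$ is a linearized (additive) polynomial of $q$-degree $q^{i-1}$ in a single new coordinate over the previously adjoined ones. Adjoining $x_2$ via $f_2$ costs degree at most $q$, and adjoining $y_{n-1},y_{n-2},\dots,y_3$ via $f_2^*,f_3^*,\dots,f_{n-2}^*$ costs degrees at most $q,q^2,\dots,q^{n-3}$. Writing $E'$ for the resulting field, which contains $x_1,x_2$ and $y_3,\dots,y_n$ together with all the $u_k$, we obtain $[E':E]\leqslant q^{1+\binom{n-2}{2}}$.

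It remains to pass from $E'$ to $\F(\WW)$ using the mixed invariants, the target being degree at most $q^{2n-4}$ (note $1+\binom{n-2}{2}+(2n-4)=\binom{n}{2}$). The $n-2$ invariants $u_{-1},\dots,u_{2-n}$ form the linear system
\[
\sum_{i=3}^{n} x_i\,y_i^{q^{j}}=u_{-j}-x_1y_1^{q^{j}}-x_2y_2^{q^{j}},\qquad j=1,\dots,n-2,
\]
whose coefficient matrix $(y_i^{q^{j}})$ is a Moore matrix with nonzero determinant, since $y_3,\dots,y_n$ are linearly independent over $\F$. Cramer's rule expresses $x_3,\dots,x_n$ as rational functions of $y_1,y_2$ over $E'$, so $\F(\WW)=E'(y_1,y_2)$. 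Substituting these into $u_0$ and $u_1$ produces two relations $P_0(y_1,y_2)=0$ and $P_1(y_1,y_2)=0$ that are affine linearized polynomials in $y_1,y_2$ over $E'$, of $q$-degrees $q^{n-2}$ and $q^{n-1}$ respectively. From $P_0$, the variable $y_2$ is algebraic of degree at most $q^{n-2}$ over $E'(y_1)$, so the step has the required degree once I show $[E'(y_1):E']\leqslant q^{n-2}$.

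This last bound is the crux and the main obstacle. One must eliminate $y_2$ between $P_0$ and $P_1$ and show that $y_1$ satisfies a linearized polynomial over $E'$ of $q$-degree at most $q^{n-2}$. A generic elimination, whether a resultant or a left common multiple in the twisted Ore ring $E'\{\Phi\}$ of $\F$-linear operators with $\Phi\colon z\mapsto z^{q}$, yields only the far weaker bound of $q$-degree about $q^{2n-3}$; the sharp bound requires exploiting the very special form of the coefficients of $P_0,P_1$, which are assembled from the $u_k$ and the Moore determinant above, so as to exhibit the large cancellation in the leading terms of the eliminant. I expect to secure this either by a careful order computation in $E'\{\Phi\}$ that tracks the cancellation explicitly, or by an induction on $n$ peeling off one coordinate at a time. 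Granting $[E'(y_1):E']\leqslant q^{n-2}$, the tower gives $[\F(\WW):E]\leqslant q^{1+\binom{n-2}{2}}\cdot q^{n-2}\cdot q^{n-2}=q^{\binom{n}{2}}$, which together with the reverse inequality forces $E=\F(\WW)^{\U}$; applying $*$ then delivers the second displayed equality.
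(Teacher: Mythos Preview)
Your proposal contains a genuine gap that you yourself flag: the bound $[E'(y_1):E']\leqslant q^{n-2}$, which you call ``the crux and the main obstacle'', is not proved. You observe that a generic elimination of $y_2$ between $P_0$ and $P_1$ yields only $q$-degree about $q^{2n-3}$, and then say you ``expect to secure'' the sharp bound either by an Ore-ring computation or by an induction on $n$---but neither is carried out. Without this step the degree count does not close and all you have established is the trivial inclusion $E\subseteq\F(\WW)^{\U}$. The required cancellation is of order $q^{n-1}$; making it explicit would almost certainly require structural identities among the $u_k$ and the Mui invariants of precisely the kind the paper imports, so the self-containedness your approach aims for may be illusory even in principle.

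The paper's argument is entirely different and sidesteps this difficulty. Rather than bounding $[\F(\WW):E]$ from above via an explicit tower, it uses the Bonnaf\'e--Kemper result that $\F(\WW)^{\U}=\F(f_1,\dots,f_n,f_1^*,\dots,f_n^*,u_0)$, so that one need only show the generators not already listed in $E$---namely $f_{n-1}^*,f_n^*,f_3,\dots,f_n$---actually lie in $E$. This follows directly from the explicit relations $(R_1^+),(R_2),(R_3^-),(R_3),\dots,(R_n^-)$ of \cite{BK2011}, each of which solves for one of these elements as a rational function of elements already known to be in $E$ (picking up the auxiliary $u_2,\dots,u_{n-2}$ along the way). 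The paper's route is therefore short and complete at the cost of importing the Bonnaf\'e--Kemper relation list; your route would be more elementary if the elimination bound could be established, but as written it is incomplete.
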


\begin{proof}
(1) Since $\textrm{U}(1,\F)=\{1\}$ is the trivial group, $\F(\WW)^{\textrm{U}(1,\F)}=\F(x_{1},y_{1})^{\textrm{U}(1,\F)}=\F(x_{1},y_{1})=\F(f_{1},f_{1}^{*})$.

(2) By Bonnaf\'e-Kemper \cite[Equation (2.11), page 106]{BK2011},
the invariant ring $\F[\WW]^{\textrm{U}(2,\F)} = \F[u_{0},f_{1},f_{2},f_{1}^{*},f_{2}^{*}]$, is a hypersurface ring with the only relation
\begin{equation*}
\label{ }
u_{0}^{q}-(f_{1}f_{1}^{*})^{q-1}u_{0}-f_{1}^{q}f_{2}^{*}-f_{1}^{*q}f_{2}=0.
\end{equation*}
Thus $\F(\WW)^{\textrm{U}(2,\F)}=\F(f_{1},f_{1}^{*},f_{2}^{*},u_{0})=\F(f_{1},f_{1}^{*},f_{2},u_{0})$.

(3) First of all, we rewrite the relations
($R_{1}^{+}$), ($R_{2}$), ($R_{3}^{-}$), ($R_{3}$),  ($R_{4}^{-}$), ($R_{4}$), $\dots$,  ($R_{n-1}^{-}$), ($R_{n-1}$) and  ($R_{n}^{-}$), in Bonnaf\'e-Kemper \cite[page 105]{BK2011} as follows:
\begin{align*}
\tag{$R_{1}^{+}$} \label{R1+}  f_{1}^{q}\cdot f_{n}^{*}&= u_{2-n}^{q}+\alp_{3-n}\cdot u_{3-n}^{q}+\dots+ \alp_{0}\cdot u_{0}^{q}+\alp_{1}\cdot u_{1}, \\
 & \quad\textrm{where all } \alp_{k}\in\F[f_{1}^{*},\dots,f_{n-1}^{*}]\textrm{ are non-zero}.\\
\tag{$R_{2}$} \label{R2}  f_{2}\cdot f_{n-1}^{*}&= u_{2-n}+\alp_{3-n}\cdot u_{3-n}+\dots+ \alp_{0}\cdot u_{0}\\
 &\quad + u_{3-n}^{q}+\bet_{4-n}\cdot u_{4-n}^{q}+\dots+ \bet_{0}\cdot u_{0}^{q}+ \bet_{1}\cdot u_{1}, \\
 &\quad  \textrm{where all } \alp_{k},\bet_{k}\in\F[f_{1},f_{1}^{*},\dots,f_{n-2}^{*}] \textrm{ are non-zero}. \\
 \tag{$R_{3}^{-}$} \label{R3-}  f_{3}\cdot f_{n-2}^{*q}&= u_{2-n}+\alp_{3-n}\cdot u_{3-n}+\dots+ \alp_{-1}\cdot u_{-1}\\
 &\quad + u_{3-n}^{q}+\bet_{4-n}\cdot u_{4-n}^{q}+\dots+ \bet_{0}\cdot u_{0}^{q}\\
  &\quad + u_{4-n}^{q^{2}}+\gam_{5-n}\cdot u_{4-n}^{q^{2}}+\dots+ \gam_{0}\cdot u_{0}^{q^{2}}+ \gam_{1}\cdot u_{1}^{q},\\
 &\quad  \textrm{where all } \alp_{k},\bet_{k}, \gam_{k}\in\F[f_{1},f_{2},f_{1}^{*},\dots,f_{n-3}^{*}] \textrm{ are non-zero}.\\
 \tag{$R_{3}$} \label{R3}  f_{3}\cdot f_{n-2}^{*}&= u_{3-n}+\alp_{4-n}\cdot u_{4-n}+\dots+ \alp_{0}\cdot u_{0}\\
 &\quad + u_{4-n}^{q}+\bet_{5-n}\cdot u_{5-n}^{q}+\dots+ \bet_{0}\cdot u_{0}^{q}+  \bet_{1}\cdot u_{1}\\
  & \quad + u_{5-n}^{q^{2}}+\gam_{6-n}\cdot u_{6-n}^{q^{2}}+\dots+ \gam_{0}\cdot u_{0}^{q^{2}}+ \gam_{1}\cdot u_{1}^{q}+ \gam_{2}\cdot u_{2},\\
 & \quad \textrm{where all } \alp_{k},\bet_{k}, \gam_{k}\in\F[f_{1},f_{2},f_{1}^{*},\dots,f_{n-3}^{*}] \textrm{ are non-zero}.\\
 &\vdots\qquad \qquad \qquad\vdots\qquad \qquad \qquad\vdots\\
  &\vdots\qquad \qquad \qquad\vdots\qquad \qquad \qquad\vdots\\
  \tag{$R_{n}^{-}$} \label{Rn-}   f_{1}^{*q}\cdot f_{n}&=  u_{n-2}^{q}+\alp_{n-3}\cdot u_{n-3}^{q}+\dots+\alp_{0}\cdot u_{0}^{q}+\alpha_{-1}\cdot u_{-1}, \\
 &  \quad  \textrm{where all } \alp_{k}\in\F[f_{1},\dots,f_{n-1}]  \textrm{ are non-zero}.
\end{align*}

Define $L:=\F(f_{1},f_{2},f_{1}^{*},f_{2}^{*},\dots,f_{n-2}^{*},u_{1},u_{0},u_{-1},\dots,u_{2-n})$. Clearly, $L\subseteq \F(\WW)^{\U}$ and $L$ is generated by
$2n$ polynomials. The transcendence degree of $\F(\WW)^{\U}$ is equal to $2n$. Thus it suffices to show that
 $\F(\WW)^{\U}\subseteq L$.
By Bonnaf\'e-Kemper \cite[Proposition 1.1]{BK2011}) we have $\F(\WW)^{\U}=\F(f_{1},\dots,f_{n},f_{1}^{*},\dots,f_{n}^{*},u_{0})$.
Hence, it suffices to show
\begin{equation}
\label{ }
f_{n-1}^{*},f_{n}^{*},f_{3},\dots,f_{n}\in L
\end{equation}
which we will show using the above relations. Indeed,
it follows from the relation (\ref{R2}) that
\begin{equation}
\label{ }
f_{n-1}^{*}\in L\ .
\end{equation}
Thus (\ref{R1+}) implies that
\begin{equation}
\label{ }
f_{n}^{*}\in L.
\end{equation}
The relation (\ref{R3-}) yields
\begin{equation}
\label{ }
f_{3}\in L
\end{equation}
which combining with (\ref{R3}) gives
\begin{equation}
\label{ }
u_{2}\in L.
\end{equation}
The fact that $u_{2}\in L$ together with $(R_{4}^{-})$ shows
that $f_{4}\in L$, and $f_{4}\in L$, together with $(R_{4})$, implies that $u_{3}\in L$.
Proceeding in this way and in this order, we deduce $f_{k}\in L$ from $(R_{k}^{-})$ and then using $(R_{k})$ we conclude
\begin{equation}
\label{ }
u_{k-1}\in L
\end{equation}
which in turn serves to show that
\begin{equation}
\label{ }
f_{k+1}\in L.
\end{equation}
Continuing we finally have
$
f_{3},\dots,f_{n},u_{2},\dots,u_{n-2}\in L.
$
This finishes the proof of the first equation of Proposition~\ref{pU}~(\ref{third case of pU}).

By Bonnaf\'e-Kemper \cite[Example 2.1]{BK2011},  $\U$ is a $*$-stable subgroup
of $\GL$. We apply the involution $*$ to $\F(\WW)^{\U}$ to demonstrate the second equation.
\end{proof}

\section{General Cases}

\setcounter{equation}{0}
\renewcommand{\theequation}
{3.\arabic{equation}}

\setcounter{theorem}{0}
\renewcommand{\thetheorem}
{3.\arabic{theorem}}

\subsection{Basic constructions}

Let $m,d\geqslant 1$ be two positive integers.
Suppose $W_{1}\cong W_{2}\cong\cdots \cong W_{m}\cong W$ and
$W_{1}^{*}\cong W_{2}^{*}\cong\cdots \cong W_{d}^{*}\cong W^{*}$ are $n$-dimensional vector spaces over $\F$.

For any $1\leqslant j\leqslant m$ and any $1\leqslant k\leqslant d$, we choose a  basis $X_{j}:=\{x_{j1},x_{j2},\dots,x_{jn}\}$ for $W_{j}$ and choose a  basis $Y_{k}:=\{y_{k1},y_{k2},\dots,y_{kn}\}$ for $W_{k}^{*}$ such that
any $X_{j}$ and $Y_{k}$ are dual bases to one another.
Then we have
\begin{eqnarray*}
\F[\W]&=& \F[W_{1}\oplus\cdots\oplus W_{m}\oplus W_{1}^{*}\oplus\cdots\oplus W_{d}^{*}]\\
&\cong&\F[x_{j1},x_{j2},\dots,x_{jn},y_{k1},y_{k2},\dots,y_{kn}:1\leqslant j\leqslant m,1\leqslant k\leqslant d]
\end{eqnarray*}
 which is a polynomial algebra of Krull dimension $(m+d)n$.

For each pair $(j,k)$ with $1 \leq j \leq m$ and $1 \leq k \leq d$ there is an invariant
$\sum_{i=1}^n x_{ji} y_{ki}$ corresponding to $u_0 \in \F[\WW]^\GL$.  Similarly for each $(j,k)$ there are
invariants corresponding to $u_i$ for all $i$.  However, in the proof for the general cases we will work
exclusively with those invariants associated to a pair $(j,1)$ or $(1,k)$.  We introduce
notation for the invariants associated to these pairs as follows.

For $1\leqslant j\leqslant m$ and $i\in \N$, we define
\begin{eqnarray}
u_{ji}&:=&x_{j1}^{q^{i}}\cdot y_{11}+x_{j2}^{q^{i}}\cdot y_{12}+\cdots+x_{jn}^{q^{i}}\cdot y_{1n}. \\
u_{j,-i}&:=&x_{j1}\cdot y_{11}^{q^{i}}+x_{j2}\cdot y_{12}^{q^{i}}+\cdots+x_{jn}\cdot y_{1n}^{q^{i}}.
\end{eqnarray}
For $1\leqslant k\leqslant d$ and $i\in \N$, we define
\begin{eqnarray}
v_{ki}&:=&y_{k1}^{q^{i}}\cdot x_{11}+y_{k2}^{q^{i}}\cdot x_{12}+\cdots+y_{kn}^{q^{i}}\cdot x_{1n}. \\
v_{k,-i}&:=&y_{k1}\cdot x_{11}^{q^{i}}+y_{k2}\cdot x_{12}^{q^{i}}+\cdots+y_{kn}\cdot x_{1n}^{q^{i}}.
\end{eqnarray}
Note that $u_{10}=v_{10}$.

For any  $1\leqslant j\leqslant m$ and any $1\leqslant k\leqslant d$, Dickson's Theorem \cite{Dic1911}
yields
\begin{equation}
\label{ }
\F[W_{j}]^{\GL}=\F[x_{j1},x_{j2},\dots,x_{jn}]^{\GL}=\F[c_{j0},c_{j1},\dots,c_{j,n-1}]\ .
\end{equation}

The polynomial ring $\F[W_{j}\oplus W_{k}^{*}]$ can be endowed with an involutive algebra endomorphism
$$*_{jk}:\F[W_{j}\oplus W_{k}^{*}]\ra\F[W_{j}\oplus W_{k}^{*}], ~~f\mapsto f^{*}$$  determined  by
$x_{j1}\mapsto y_{kn}, x_{j2}\mapsto y_{k,n-1},\dots,x_{j,n-1}\mapsto y_{k2},x_{jn}\mapsto y_{k1}.$
As in the  previous section, we have
$\F[W_{k}^{*}]^{\GL}=\F[y_{k1},y_{k2},\dots,y_{kn}]^{\GL}=\F[c_{k0}^{*},c_{k1}^{*},\dots,c_{k,n-1}^{*}]$
where $c_{ki}^{*}=*_{jk}(c_{ji})$ for $0\leqslant i\leqslant n-1$.
 Moreover, Mui's Theorem \cite{Mui1975} yields
\begin{equation}
\label{ }
\F[W_{j}]^{\U}=\F[x_{j1},x_{j2},\dots,x_{jn}]^{\U}=\F[f_{j1},f_{j2},\dots,f_{jn}]
\end{equation}
where
\begin{equation}
\label{ }
f_{ji}=\prod_{v\in W_{j,i-1}^{*}} (x_{ji}+v)
\end{equation}
and $W_{j,i-1}^{*}$ denotes the subspace of $W_{j}^{*}$ with the basis $\{x_{j1},x_{j2},\dots,x_{j,i-1}\}$.
Similarly, we have
$$\F[W_{k}^{*}]^{\U}=\F[y_{k1},y_{k2},\dots,y_{kn}]^{\U}=\F[f_{k1}^{*},f_{k2}^{*},\dots,f_{kn}^{*}]$$
such that $f_{ki}^{*}=*_{jk}(f_{ji})$ for $1\leqslant i\leqslant n$.

\subsection{An application of Galois theory}

The following result directly generalizes Bonnaf\'e-Kemper \cite[Proposition 1.1]{BK2011}.

\begin{prop}\label{pre}
Let $G\leqslant\GL$ be any subgroup. Write $\F[W_{j}]^{G}=\F[g_{j1},\dots,g_{js}]$ and $\F[W_{k}^{*}]^{G}=\F[h_{k1},\dots,h_{kt}]$
for $1\leqslant j\leqslant m$ and $1\leqslant k\leqslant d$. Then
$\F(\W)^{G}$ is generated by $\{g_{j1},\dots,g_{js},u_{j0}:1\leqslant j\leqslant m\}\cup\{h_{k1},\dots,h_{kt}:1\leqslant k\leqslant d\}\cup\{v_{k0}:2\leqslant k\leqslant d\}$.
\end{prop}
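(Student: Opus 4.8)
The plan is to prove Proposition~\ref{pre} by induction on the total number $m+d$ of copies, isolating a single ``adjoin one copy'' step. For a subrepresentation $V$ assembled from copies of $W$ and $W^{*}$, write $K_{V}$ for the field generated by the generators that the proposition associates to $V$; every such generator is $G$-invariant, so $K_{V}\subseteq\F(V)^{G}$. I will use Artin's theorem throughout: since $G\leqslant\GL$ acts faithfully, $\F(V)/\F(V)^{G}$ is Galois with group $G$, so $[\F(V):\F(V)^{G}]=|G|$. Hence, to prove $K_{V}=\F(V)^{G}$ it suffices to establish the inequality $[\F(V):K_{V}]\leqslant|G|$.

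I would adjoin the copies one at a time, always linking the new copy to a hub of the \emph{opposite} type: a new vector $W_{m+1}$ is linked to the covector hub $W_{1}^{*}$ through $u_{m+1,0}$, and a new covector $W_{d+1}^{*}$ is linked to the vector hub $W_{1}$ through $v_{d+1,0}$. The base case is the single copy $V=W_{1}$, where $\F(W_{1})^{G}$ is tautologically the fraction field $\F(g_{11},\dots,g_{1s})$. The order is chosen so that a hub of each type is available before it is used: first adjoin $W_{1}^{*}$ to $W_{1}$ (creating the covector hub and yielding the case $m=d=1$), after which both hubs $W_{1},W_{1}^{*}$ are permanently present and the remaining $m-1$ vectors and $d-1$ covectors may be adjoined in any order. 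This is exactly where the hypothesis $m,d\geqslant1$ enters: without a hub of the opposite type there is no linking pairing to adjoin, which is the source of the failure when $md=0$.

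The heart of the matter is the one-step lemma; I state the covector case, the vector case being identical after exchanging the roles of the $x$'s and $y$'s. Let $V$ be built so far, assume inductively that $\F(V)^{G}=K_{V}$, and set $V'=V\oplus W_{d+1}^{*}$ and $K_{V'}=K_{V}(h_{d+1,1},\dots,h_{d+1,t},v_{d+1,0})$, where $v_{d+1,0}=\sum_{l=1}^{n}y_{d+1,l}\,x_{1l}$. First I would saturate by $\F(V)$: since $K_{V}=\F(V)^{G}\subseteq\F(V)$, one has $\F(V)\cdot K_{V'}=\F(V)(h_{d+1,1},\dots,h_{d+1,t},v_{d+1,0})$. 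The extension $\F(V)(y_{d+1,1},\dots,y_{d+1,n})/\F(V)(h_{d+1,1},\dots,h_{d+1,t})$ is obtained by base change from the Galois extension $\F(W_{d+1}^{*})/\F(W_{d+1}^{*})^{G}$ along the field $\F(V)$, which is linearly disjoint from $\F(W_{d+1}^{*})$ over $\F$; it is therefore Galois with group $G$ acting on the fresh coordinates $y_{d+1,l}$ and fixing $\F(V)$. Under this action $\sigma(v_{d+1,0})=\sum_{l}\sigma(y_{d+1,l})\,x_{1l}$, and as the $x_{1l}$ lie in $\F(V)$ and are algebraically independent from the $y_{d+1,l}$, the relation $\sigma(v_{d+1,0})=v_{d+1,0}$ forces $\sigma(y_{d+1,l})=y_{d+1,l}$ for all $l$, hence $\sigma=1$ by faithfulness. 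Thus $v_{d+1,0}$ has trivial stabilizer and generates the whole extension, giving $\F(V)\cdot K_{V'}=\F(V)(y_{d+1,1},\dots,y_{d+1,n})=\F(V')$. Finally, natural irrationalities applied to the Galois extension $\F(V)/\F(V)^{G}=\F(V)/K_{V}$ together with $K_{V}\subseteq K_{V'}$ yields $[\F(V'):K_{V'}]=[\F(V)\cdot K_{V'}:K_{V'}]=[\F(V):\F(V)\cap K_{V'}]\leqslant[\F(V):\F(V)^{G}]=|G|$, and the reduction above forces $K_{V'}=\F(V')^{G}$, completing the induction.

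The main obstacle is conceptual rather than computational: one must saturate by the \emph{entire} previously constructed invariant field $\F(V)^{G}$, supplied by the inductive hypothesis, and not merely by the individual invariants $g_{j1},\dots,g_{js}$ and $h_{k1},\dots,h_{kt}$, since for more than one vector (or covector) the field $\F(mW)^{G}$ is strictly larger than the field generated by the single-copy invariants. It is precisely the containment $\F(V)^{G}=K_{V}\subseteq K_{V'}$ that makes the natural-irrationalities bound equal to $|G|$ rather than a power of $|G|$; arranging the induction so that this containment holds at every stage, while keeping a hub of each type available, is the crux of the argument.
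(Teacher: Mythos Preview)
Your proof is correct and takes a genuinely different route from the paper's.  The paper argues in a single stroke rather than by induction: it introduces the product group $\overline{G}=G^{m+d}$ acting componentwise on $\W$, so that $L_0:=\F(\W)^{\overline G}=\F(g_{ji},h_{ki})$ and $\F(\W)/L_0$ is Galois with group $\overline G$.  With $L$ the field generated by the proposed generators, the Galois group of $\F(\W)/L$ is some $G_L\leqslant\overline G$ containing the diagonal copy of $G$, and the paper shows directly that any $(\sigma_1,\dots,\sigma_m,\tau_1,\dots,\tau_d)\in\overline G$ fixing all the $u_{j0}$ and $v_{k0}$ must satisfy $\sigma_j=\tau_1$ and $\tau_k=\sigma_1$ for all $j,k$, forcing $G_L$ to be the diagonal.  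Your approach instead peels off one summand at a time and controls the degree via natural irrationalities; it makes the hub structure and the role of the hypothesis $m,d\geqslant1$ very explicit, and the one-step lemma is a reusable tool.  The paper's argument is shorter and more symmetric, and it reveals at once that the pairings $u_{j0},v_{k0}$ are exactly the extra invariants needed to cut $\overline G$ down to its diagonal.
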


\begin{proof}
The group $\overline{G} := \underbrace{G \times G \times \cdots \times G}_{m+d}$
acts on $\W$ in the obvious way.   Furthermore
$\F[\W]^{\overline{G}} = \F[g_{j1},\dots,g_{js},h_{k1},\dots,h_{kt}:1\leqslant j\leqslant m,1\leqslant k\leqslant d].$
For $\sigma \in G$ we write $\diag(\sigma)$ to denote the element
$\diag(\sigma) := (\sigma,\sigma,\dots,\sigma) \in G \subset \overline{G}$.

Let $L$ be the subfield of $\F(\W)$ generated by $$\{g_{j1},\dots,g_{js},u_{j0}:1\leqslant j\leqslant m\}\cup\{h_{k1},\dots,h_{kt}:1\leqslant k\leqslant d\}\cup\{v_{k0}:2\leqslant k\leqslant d\}$$ over $\F$.

Let $L_0$ denote the field
$$L_{0}:=\F(\W)^{\overline{G}}=\F(g_{j1},\dots,g_{js},h_{k1},\dots,h_{kt}:1\leqslant j\leqslant m,1\leqslant k\leqslant d).$$
By Artin's theorem \cite[page 264, Theorem 1.8]{Lang2002}, $\F(\W)$ is Galois over $L_{0}$ with group $\overline{G}$.
Thus $\F(\W)$ is also Galois over $L$ with the Galois group $G_{L}$, say. Now we have the following
situation:
$$
\xymatrix{
L_{0} ~\ar@{--}[d] \ar@{^{(}->}[r] &~L~ \ar@{--}[d] \ar@{^{(}->}[r]& ~\F(\W)^{G}~ \ar@{--}[d] \ar@{^{(}->}[r]&  ~\F(\W) \ar@{--}[d]\\
\overline{G}~& ~G_{L}~\ar@{_{(}->}[l]  &~G~ \ar@{_{(}->}[l] & ~1 \ar@{_{(}->}[l]
}$$
By Galois theory, to show $\F(\W)^{G}=L$, it suffices to show that $G_{L}=G$.
By definition $G\subseteq G_{L}$.

Conversely,
let $\alpha=(\sig_{1},\cdots,\sig_{m}, \tau_{1},\cdots,\tau_{d})$ be an arbitrary element of  $G_{L}\subseteq \overline{G}$.
   Fix $j$ with $1 \leq j \leq m$.
Since $u_{j0} \in L$ and $\diag(\sigma_j^{-1}) \in G \subset G_L$ and $\alpha \in G_L$, both $\alpha$ and $\diag(\sigma_j^{-1})$
 fix $u_{j0}$.

Thus we have
\begin{align*}
u_{j0}~=&~\alpha \cdot( \diag(\sigma_j^{-1}) \cdot u_{j0}) = (\alpha \,\diag(\sigma_j^{-1}))\cdot u_{j0}\\
          =&~(\sigma_1\sigma_j^{-1},\dots,\id,\dots,\sigma_m\sigma_j^{-1},\tau_1 \sigma_j^{-1}, \dots, \tau_d \sigma_j^{-1})\cdot u_{j0}\ .
\end{align*}
Since $\{x_{j1},x_{j2},\dots,x_{jn}\}$ is an algebraically independent set, this forces $(\tau_1 \sigma_j^{-1}) \cdot y_{1i} = y_{1i}$
for all $i=1,2,\dots,n$.  Therefore $\tau_1=\sigma_j$ and this holds for all $j=1,2,\dots,m$.

Similarly since $\alpha$ and $\diag(\tau_k^{-1})$ both fix $v_{k0}$ we have $\sigma_1 = \tau_k$ for all $k=1,2,\dots,d$.
Therefore, $\alpha = \diag(\sigma_1) \in G$ as required.
\end{proof}

Proposition~\ref{pre} immediately yields some large generating sets for $\F(\W)^G$ for $G=\GL$, $\SL$ or $\U$:

\begin{coro}\label{prec}
\begin{enumerate}
  \item The invariant field  $\F(\W)^{\GL}$ is generated by $$\{c_{j0},c_{j1},\dots,c_{j,n-1},u_{j0}:1\leqslant j\leqslant m\}\cup\{c_{k0}^{*},c_{k1}^{*},\dots,c_{k,n-1}^{*}:1\leqslant k\leqslant d\}\cup\{v_{k0}:2\leqslant k\leqslant d\}.$$
  \item  The invariant field $\F(\W)^{\SL}$ is generated by $$\{d_{jn},c_{j1},\dots,c_{j,n-1},u_{j0}:1\leqslant j\leqslant m\}\cup\{d_{kn}^{*},c_{k1}^{*},\dots,c_{k,n-1}^{*}:1\leqslant k\leqslant d\}\cup\{v_{k0}:2\leqslant k\leqslant d\}$$
  where $d_{jn}$ and $d_{kn}^{*}$ are defined by $c_{j0}=d_{jn}^{q-1}$ and $c_{k0}^{*}=(d_{kn}^{*})^{q-1}$ respectively.
  \item The invariant field $\F(\W)^{\U}$ is generated by $$\{f_{j1},\dots,f_{jn},u_{j0}:1\leqslant j\leqslant m\}\cup\{f_{k1}^{*},\dots,f_{kn}^{*}:1\leqslant k\leqslant d\}\cup\{v_{k0}:2\leqslant k\leqslant d\}.$$
\end{enumerate}
\end{coro}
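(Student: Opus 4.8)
The plan is to obtain all three parts as immediate specializations of Proposition~\ref{pre}. That proposition already establishes, for an \emph{arbitrary} subgroup $G\leqslant\GL$, that $\F(\W)^{G}$ is generated by the displayed union of sets once one knows polynomial generators $\F[W_{j}]^{G}=\F[g_{j1},\dots,g_{js}]$ for the invariant ring of a single vector and $\F[W_{k}^{*}]^{G}=\F[h_{k1},\dots,h_{kt}]$ for the invariant ring of a single covector. So the only work is to insert, for each of the three groups, the classically known such generating sets, which have already been recorded in the ``Basic constructions'' subsection.

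Concretely, for $G=\GL$ Dickson's Theorem gives $\F[W_{j}]^{\GL}=\F[c_{j0},c_{j1},\dots,c_{j,n-1}]$ and dually $\F[W_{k}^{*}]^{\GL}=\F[c_{k0}^{*},\dots,c_{k,n-1}^{*}]$, so that $s=t=n$; substituting $g_{ji}=c_{ji}$ and $h_{ki}=c_{ki}^{*}$ into Proposition~\ref{pre} produces part (1) verbatim. For $G=\SL$ the corresponding Dickson generating sets are $\F[W_{j}]^{\SL}=\F[d_{jn},c_{j1},\dots,c_{j,n-1}]$, with $d_{jn}$ the $\SL$-invariant determined by $c_{j0}=d_{jn}^{q-1}$, and dually with $d_{kn}^{*}$; here again $s=t=n$, and substitution yields part (2). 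For $G=\U$ Mui's Theorem gives $\F[W_{j}]^{\U}=\F[f_{j1},\dots,f_{jn}]$ and $\F[W_{k}^{*}]^{\U}=\F[f_{k1}^{*},\dots,f_{kn}^{*}]$, again with $s=t=n$, and substitution gives part (3).

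I do not expect any genuine obstacle here: the content lies entirely in Proposition~\ref{pre}, and what remains is purely the bookkeeping of matching indices and recalling that the Dickson and Mui sets are indeed polynomial generators of the relevant one-block invariant rings. The single point worth flagging is that these generating sets are \emph{not} minimal: in each case the listed set has cardinality $m(n+1)+dn+(d-1)=(m+d)n+(m+d-1)$, exceeding the transcendence degree $(m+d)n$ by $m+d-1$. Trimming this down to a generating set of size exactly $(m+d)n$ is precisely the work of Theorem~\ref{mt}, carried out in the sections that follow, and is not claimed by the present corollary.
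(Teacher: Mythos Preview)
Your proposal is correct and matches the paper's own argument: the paper states that Proposition~\ref{pre} ``immediately yields'' Corollary~\ref{prec}, and you have spelled out exactly this, plugging in the Dickson, $\SL$-Dickson, and Mui generators for $\F[W_j]^G$ and $\F[W_k^*]^G$ as recorded in the Basic constructions subsection. Your additional cardinality remark is accurate and consistent with the paper's description of these as ``large'' (non-minimal) generating sets.
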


\subsection{Generators for $\F(\W)^{\GL}$ and $\F(\W)^{\SL}$}

We define
\begin{eqnarray*}
A & := & \{c_{10},u_{1i}:1-n\leqslant i\leqslant n-1\} \\
B & := & \{u_{ji}:2\leqslant j\leqslant m,1-n\leqslant i\leqslant 0\}\\
C & := &  \{v_{ki}:2\leqslant k\leqslant d,0\leqslant i\leqslant n-1\}.
\end{eqnarray*}
Note that $|A|+|B|+|C|=2n+(m-1)n+(d-1)n=(m+d)n.$
The following theorem is the first main result.

\begin{thm} \label{GL}
The invariant field
$\F(\W)^{\GL}$ is generated by $A\cup B\cup C$. 
\end{thm}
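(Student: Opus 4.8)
The plan is to prove the nontrivial inclusion $\F(\W)^{\GL}\subseteq\F(A\cup B\cup C)$; the reverse inclusion is automatic, since every element of $A\cup B\cup C$ is a $\GL$-invariant. Write $L:=\F(A\cup B\cup C)$, and recall that $|A|+|B|+|C|=(m+d)n$ equals the transcendence degree of $\F(\W)^{\GL}$. By Corollary~\ref{prec}(1) the field $\F(\W)^{\GL}$ is generated by the Dickson invariants $c_{j0},\dots,c_{j,n-1}$ $(1\le j\le m)$ and $c_{k0}^{*},\dots,c_{k,n-1}^{*}$ $(1\le k\le d)$ together with $u_{j0}$ $(1\le j\le m)$ and $v_{k0}$ $(2\le k\le d)$. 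Among these, $u_{10}\in A$, $u_{j0}\in B$ for $j\ge2$, and $v_{k0}\in C$ for $k\ge2$ are present by definition, so the entire problem reduces to showing that each Dickson invariant $c_{js}$ and $c_{ks}^{*}$ lies in $L$.

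First I would dispose of the ``diagonal'' pair $(W_1,W_1^{*})$. The subset $A=\{c_{10},u_{1i}:1-n\le i\le n-1\}$ is exactly the generating set of $\F(W_1\oplus W_1^{*})^{\GL}$ given by the first identity of Proposition~\ref{pGL} applied to this pair, so $\F(A)=\F(W_1\oplus W_1^{*})^{\GL}$. The third identity of Proposition~\ref{pGL}, together with its image under the involution (which preserves the invariant field because $\GL$ is $*$-stable), then shows that this field already contains all of $c_{10},\dots,c_{1,n-1}$ and $c_{10}^{*},\dots,c_{1,n-1}^{*}$. Thus both families of Dickson invariants with index $1$ are in $L$, and in particular the known data $c_{10},c_{10}^{*}$ (equivalently $d_{1n},d_{1n}^{*}$) is available for the recovery steps below.

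Next, for each $j\ge2$ I would recover $c_{j0},\dots,c_{j,n-1}$ from the pair $(W_j,W_1^{*})$. Applying the involution $*_{j1}$ to the third identity of Proposition~\ref{pGL} for this pair sends $c_{js}\mapsto c_{1s}^{*}$ and $u_{ji}\mapsto u_{j,-i}$, yielding
\begin{equation*}
\F(W_j\oplus W_1^{*})^{\GL}=\F\big(c_{10}^{*},\dots,c_{1,n-1}^{*},\,u_{j0},u_{j,-1},\dots,u_{j,-(n-1)}\big).
\end{equation*}
The generators on the right are precisely the already-known covector Dickson invariants $c_{1s}^{*}$ together with the elements $u_{ji}$ $(1-n\le i\le0)$ of $B$; hence $\F(W_j\oplus W_1^{*})^{\GL}\subseteq L$, and in particular $c_{j0},\dots,c_{j,n-1}\in L$. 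The covector side is treated in the mirror-image fashion: for each $k\ge2$ I would use the pair $(W_1,W_k^{*})$ and the appropriate identity of Proposition~\ref{pGL}, now matching the Frobenius-twisted invariants of $C$ against the already-known \emph{vector} Dickson invariants $c_{1s}$, to obtain $\F(W_1\oplus W_k^{*})^{\GL}\subseteq L$ and thereby $c_{k0}^{*},\dots,c_{k,n-1}^{*}\in L$. Combined with the preceding steps, this places every generator of Corollary~\ref{prec}(1) inside $L$ and completes the proof.

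The step I expect to be delicate is the covector recovery for $k\ge2$. One must select the correct identity of Proposition~\ref{pGL} (and, if necessary, the correct application of the involution $*_{1k}$) so that the invariants drawn from $C$ are paired against the \emph{known} Dickson data $c_{1s}$ of $W_1$ rather than against the as-yet-unknown $c_{ks}^{*}$; keeping the bookkeeping of Frobenius directions straight here is exactly the crux, since the roles of $B$ and $C$ are only superficially symmetric. The concrete device that makes all of these matchings effective is the Moore-type ``big equation'' (reused in the Remark following Proposition~\ref{pSL}), which expresses a product such as $d_{1n}d_{kn}^{*}$ as an explicit polynomial in the relevant $u$- and $v$-invariants; it is this identity that allows one to solve rationally for the missing Dickson invariants in terms of the generators retained in $A\cup B\cup C$.
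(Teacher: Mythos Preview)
Your treatment of the diagonal pair $(W_1,W_1^*)$ matches the paper's, and your handling of the pairs $(W_j,W_1^*)$ for $j\ge2$ is correct and in fact cleaner than the paper's: applying $*_{j1}$ to the third identity of Proposition~\ref{pGL} yields
\[
\F(W_j\oplus W_1^*)^{\GL}=\F\bigl(c_{10}^*,\dots,c_{1,n-1}^*,\,u_{j0},u_{j,-1},\dots,u_{j,1-n}\bigr),
\]
and the right-hand side is visibly contained in $L$. The paper instead reaches $(\dag_2)$ by invoking the relations $(T_i^*)$ to pull $u_{j1},\dots,u_{j,n-1}$ into $L$ and then the Moore determinant~(\ref{big equation}) to get $c_{j0}$; your shortcut bypasses both devices.

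The genuine gap is the covector step, and it is not merely bookkeeping. For $k\ge2$ you need an identity of the form
\[
\F(W_1\oplus W_k^*)^{\GL}=\F\bigl(c_{10},\dots,c_{1,n-1},\,v_{k0},v_{k1},\dots,v_{k,n-1}\bigr),
\]
but no such identity appears in Proposition~\ref{pGL}, nor is it a $*_{1k}$-image of one. In the notation of Proposition~\ref{pGL} applied to $(W_1,W_k^*)$ one has $u_i=v_{k,-i}$, so the third identity pairs the known $c_{1s}$ with the \emph{unknown} $v_{k,-1},\dots,v_{k,1-n}$, while its $*$-image pairs the elements of $C$ with the \emph{unknown} $c_{ks}^*$; the first two identities likewise require the full range $v_{k,1-n},\dots,v_{k,n-1}$. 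The Moore determinant you invoke does not close this gap either: it expresses $d_{1n}d_{kn}^*$ as a determinant in the $v_{ki}$ for $1-n\le i\le n-1$, so it already presupposes the negative-index $v_{k,-i}$ you do not yet have. What actually works is exactly the paper's argument for $(\dag_3)$: the recursions $(T_1),\dots,(T_{n-1})$ successively solve for $v_{k,-1},\dots,v_{k,1-n}$ in terms of $c_{10},\dots,c_{1,n-1}$ (known from your first step) and the elements of $C$, after which any form of Proposition~\ref{pGL} finishes.
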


\begin{proof}
Let $L:=\F(A\cup B\cup C)$ denote the subfield generated by $A\cup B\cup C$ over $\F$.
Since $A \cup B \cup C \subset \F[\W]^\GL$, it suffices to prove that
that $\F(\W)^{\GL}\subseteq L$.
By Corollary~\ref{prec} (1),
it is sufficient to show the following three statements:
\begin{align*}
\tag{$\dag_{1}$}\label{S1} c_{11},\dots, c_{1,n-1},c_{10}^{*},c_{11}^{*},\dots, c_{1,n-1}^{*} & \in  L \\
 \tag{$\dag_{2}$}\label{S2}   c_{j0},c_{j1},\dots, c_{j,n-1} & \in  L \qquad \textrm{ for }  2\leqslant j\leqslant m\\
  \tag{$\dag_{3}$}\label{S3}  c_{k0}^{*},c_{k1}^{*},\dots, c_{k,n-1}^{*} & \in  L \qquad \textrm{ for }  2\leqslant k\leqslant d.
\end{align*}

Firstly, consider the invariant field
$\F(W_{1}\oplus W_{1}^{\ast})^{\GL}=\F(x_{11},\dots,x_{1n},y_{11},\dots,y_{1n})^{\GL}$.
By Proposition~\ref{pGL},  $\F(W_{1}\oplus W_{1}^{\ast})^{\GL}=\F(c_{10},u_{1,1-n},\dots,u_{1,-1},u_{1,0},u_{11},\dots,u_{1,n-1})\subset L.$ Thus, $c_{11},\dots, c_{1,n-1},c_{10}^{*},c_{11}^{*},\dots, c_{1,n-1}^{*} \in  L$. This proves (\ref{S1}).

Secondly,
for $2\leqslant j\leqslant m$, we consider
$\F(W_{j}\oplus W_{1}^{\ast})^{\GL}=\F(x_{j1},\dots,x_{jn},y_{11},\dots,y_{1n})^{\GL}$.
It follows from Proposition~\ref{pGL}
that $$c_{j0},c_{j1},\dots, c_{j,n-1}\in \F(c_{j0},u_{j,1-n},\dots,u_{j,-1},u_{j0},u_{j1},\dots,u_{j,n-1})\subset L(c_{j0},u_{j1},\dots,u_{j,n-1}).$$
Thus to show (\ref{S2}) it suffices to show that
\begin{equation}
\label{ }
c_{j0},u_{j1},\dots,u_{j,n-1}\in L \quad\text{ for } 2 \leq j \leq m.
\end{equation}
Recall the relations (\ref{T1s}), (\ref{T2s}), and (\ref{Tn-1s}) in the invariant ring  $\F[W_{j}\oplus W_{1}^{\ast}]^{\GL}$ (see Chen-Wehlau \cite{CW2015}):
\begin{align*}
\tag{$T_{1}^{*}$}\label{T1s} c_{10}^{*}u_{j1}-c_{11}^{*}u_{j0}^{q}+c_{12}^{*}u_{j,-1}^{q}+\cdots+(-1)^{n}c_{1,n-1}^{*}u_{j,2-n}^{q} +(-1)^{n}u_{j,1-n}^{q} & =  0 \\
\tag{$T_{2}^{*}$}\label{T2s}     c_{10}^{*}u_{j2}-c_{11}^{*}u_{j1}^{q}+c_{12}^{*}u_{j0}^{q^{2}}+\cdots+(-1)^{n}c_{1,n-1}^{*}u_{j,3-n}^{q^{2}} +(-1)^{n}u_{j,2-n}^{q^{2}} & =  0   \\
\qquad\qquad\qquad\qquad\qquad\qquad\qquad\vdots\qquad\qquad\qquad\vdots\qquad\qquad\qquad\vdots\qquad\qquad\qquad\vdots\qquad\qquad&\quad\vdots\\
\tag{$T_{n-1}^{*}$}\label{Tn-1s}  c_{10}^{*}u_{j,n-1}-c_{11}^{*}u_{j,n-2}^{q}+c_{12}^{*}u_{j,n-3}^{q^{2}}-\cdots+(-1)^{n-1}c_{1,n-1}^{*}u_{j0}^{q^{n-1}}+(-1)^{n}u_{j,-1}^{q^{n-1}}&=0.
\end{align*}
Since $c_{10}^{*},c_{11}^{*},\dots, c_{1,n-1}^{*} \in  L$ by (\ref{S1}),
the relation (\ref{T1s}) implies that
$$u_{j1}\in L$$
which, together with the relation (\ref{T2s}), implies that
$$u_{j2}\in L.$$
Proceeding in this way and using the relations (\ref{T1s}), (\ref{T2s}), and (\ref{Tn-1s}) (in this order), we eventually obtain
\begin{equation}
\label{uj+}
u_{j1},u_{j2},\dots,u_{j,n-1}\in L.
\end{equation}
To show $c_{j0}\in L$, we recall that
$c_{j0}=d_{jn}^{q-1}$ and $c_{10}^{*}=(d_{1n}^{*})^{q-1}$.  Furthermore
\begin{eqnarray}
d_{jn}\cdot d_{1n}^{*}& = & \textrm{det}\begin{pmatrix}
     x_{j1} &x_{j2}&\cdots&x_{jn}    \\
     x_{j1}^{q} &x_{j2}^{q}&\cdots&x_{jn}^{q}   \\
     \vdots&\vdots&\vdots&\vdots\\
     x_{j1}^{q^{n-1}} &x_{j2}^{q^{n-1}}&\cdots&x_{jn}^{q^{n-1}}   \\
\end{pmatrix}\cdot\textrm{det}\begin{pmatrix}
     y_{11} &y_{11}^{q}&\cdots&y_{11}^{q^{n-1}}    \\
     y_{12} &y_{12}^{q}&\cdots&y_{12}^{q^{n-1}}    \\
     \vdots&\vdots&\vdots&\vdots\\
    y_{1n} &y_{1n}^{q}&\cdots&y_{1n}^{q^{n-1}}    \\
\end{pmatrix}\nonumber\\
&=&\textrm{det}\begin{pmatrix}
     u_{j0} &   u_{j,-1}&u_{j,-2}&\cdots&u_{j,1-n} \\
      u_{j1}&  u_{j0}^{q}&u_{j,-1}^{q}&\cdots& u_{j,2-n}^{q}\\
      u_{j2}&u_{j1}^{q}&\ddots&\ddots&\vdots\\
      \vdots&\ddots&\ddots&\ddots&u_{j,-1}^{q^{n-2}}\\
      u_{j,n-1}&u_{j,n-2}^{q}&\cdots&u_{j1}^{q^{n-2}}&u_{j0}^{q^{n-1}}
\end{pmatrix} \in L\ ,  \label{big equation}
\end{eqnarray}
 see Chen-Wehlau \cite{CW2015}.
 Hence $c_{j0}= (c_{10}^{*})^{-1}\cdot(c_{j0}\cdot c_{10}^{*}) = (c_{10}^{*})^{-1}\cdot(d_{jn}\cdot d_{1n}^{*})^{q-1}\in L$.
This completes the proof of (\ref{S2}).

Finally, we show (\ref{S3}). For any $2\leqslant k\leqslant d$, we consider
$$\F(W_{1}\oplus W_{k}^{\ast})^{\GL}=\F(x_{11},\dots,x_{1n},y_{k1},\dots,y_{kn})^{\GL}.$$
Therefore
$$c_{k0}^{*},c_{k1}^{*},\dots, c_{k,n-1}^{*}\in \F(c_{10},v_{k,1-n},\dots,v_{k,-1},v_{k0},v_{k1},\dots,v_{k,n-1})\subset L(v_{k,1-n},\dots,v_{k,-1}).$$ Thus it suffices to show that
\begin{equation}
\label{ }
v_{k,-1},v_{k,-2},\dots,v_{k,1-n}\in L.
\end{equation}
In the invariant  ring  $\F[W_{1}\oplus W_{k}^{\ast}]^{\GL}$, we have the following relations (\ref{T1}), (\ref{T2}), and (\ref{Tn-1})
described in \cite{CW2015}:
\begin{align*}
\tag{$T_{1}$}\label{T1} c_{10}v_{k,-1}-c_{11}v_{k0}^{q}+c_{12}v_{k1}^{q}+\cdots+(-1)^{n}c_{1,n-1}v_{k,n-2}^{q} +
(-1)^{n}v_{k,n-1}^{q} & =  0 \\
\tag{$T_{2}$}\label{T2}     c_{10}v_{k,-2}-c_{11}v_{k,-1}^{q}+c_{12}v_{k0}^{q^{2}}+\cdots+(-1)^{n}c_{1,n-1}v_{k,n-3}^{q^{2}} +(-1)^{n}v_{k,n-2}^{q^{2}} & =  0   \\
\cdots\quad\cdots\quad\cdots\quad\cdots\quad\cdots\quad\cdots\quad\cdots\quad\cdots\quad\cdots&\\
\tag{$T_{n-1}$}\label{Tn-1}  c_{10}v_{k,1-n}-c_{11}v_{k,2-n}^{q}+c_{12}v_{k,3-n}^{q^{2}}-\cdots+(-1)^{n-1}c_{1,n-1}v_{k0}^{q^{n-1}}+(-1)^{n}v_{k1}^{q^{n-1}}&=0.
\end{align*}
As before, these relations (\ref{T1}), (\ref{T2}), and (\ref{Tn-1}) (in this order), together with
(\ref{S1}), imply that $v_{k,-1},v_{k,-2},\dots,v_{k,1-n}\in L$.

This completes the proof.
\end{proof}

A  similar argument shows

\begin{thm} \label{SL}
The invariant field
$\F(\W)^{\SL}$ is  generated by $\{d_{1n},u_{1i}:1-n\leqslant i\leqslant n-1\}\cup\{u_{ji}:2\leqslant j\leqslant m,1-n\leqslant i\leqslant 0\}\cup \{v_{ki}:2\leqslant k\leqslant d,0\leqslant i\leqslant n-1\}$.
\end{thm}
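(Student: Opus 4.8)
The plan is to mirror the proof of Theorem~\ref{GL} almost line for line, replacing throughout the top Dickson invariants $c_{j0}$ by the $\SL$-invariants $d_{jn}$ (recall $c_{j0}=d_{jn}^{q-1}$), and invoking Proposition~\ref{pSL} and Corollary~\ref{prec}~(2) in place of Proposition~\ref{pGL} and Corollary~\ref{prec}~(1). Write $A':=\{d_{1n},u_{1i}:1-n\leqslant i\leqslant n-1\}$ and set $L:=\F(A'\cup B\cup C)$. The set $A'\cup B\cup C$ again has cardinality $(m+d)n$, and since $d_{1n}$ is $\SL$-invariant while each $u_{ji}$ and $v_{ki}$ is even $\GL$-invariant, we have $A'\cup B\cup C\subset\F[\W]^{\SL}$, so that $L\subseteq\F(\W)^{\SL}$; it therefore suffices to establish the reverse inclusion $\F(\W)^{\SL}\subseteq L$. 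Among the generators of $\F(\W)^{\SL}$ listed in Corollary~\ref{prec}~(2), those already lying in $A'\cup B\cup C$ are $d_{1n}$, the $u_{j0}$ for $1\leqslant j\leqslant m$, and the $v_{k0}$ for $2\leqslant k\leqslant d$; so it remains only to prove the three $\SL$-analogues of $(\dag_{1})$, $(\dag_{2})$, $(\dag_{3})$: that $L$ contains $c_{11},\dots,c_{1,n-1},d_{1n}^{*},c_{11}^{*},\dots,c_{1,n-1}^{*}$; that $d_{jn},c_{j1},\dots,c_{j,n-1}\in L$ for $2\leqslant j\leqslant m$; and that $d_{kn}^{*},c_{k1}^{*},\dots,c_{k,n-1}^{*}\in L$ for $2\leqslant k\leqslant d$.

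For the first statement I would apply Proposition~\ref{pSL} to the pair $(W_{1},W_{1}^{*})$. All generators of $\F(W_{1}\oplus W_{1}^{*})^{\SL}=\F(d_{1n},u_{1,1-n},\dots,u_{1,n-1})$ lie in $A'$, so this entire field lies in $L$; its alternative presentations in Proposition~\ref{pSL} then show that it contains $d_{1n}^{*}$ together with the Dickson invariants $c_{1i}$ and $c_{1i}^{*}$. In particular $c_{10}=d_{1n}^{q-1}$ and $c_{10}^{*}=(d_{1n}^{*})^{q-1}$ both lie in $L$; it is worth recording these explicitly, because the recursions $(\ref{T1s})$--$(\ref{Tn-1s})$ and $(\ref{T1})$--$(\ref{Tn-1})$ used below are written with leading coefficients $c_{10}^{*}$ and $c_{10}$ rather than with the $d$'s.

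The remaining two statements run exactly as in Theorem~\ref{GL}. For $2\leqslant j\leqslant m$ the relations $(\ref{T1s})$, $(\ref{T2s})$, $\dots$, $(\ref{Tn-1s})$, which involve only $c_{10}^{*},\dots,c_{1,n-1}^{*}$ and the $u_{ji}$, propagate $u_{j1},\dots,u_{j,n-1}$ into $L$ in turn. Then the determinant identity $(\ref{big equation})$ gives $d_{jn}d_{1n}^{*}\in L$, and since $d_{1n}^{*}\in L$ we divide to get $d_{jn}=(d_{1n}^{*})^{-1}(d_{jn}d_{1n}^{*})\in L$ at once; this is the single point at which the $\SL$ argument is genuinely cleaner than the $\GL$ one, where $c_{j0}$ had to be recovered as a $(q-1)$-th power. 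With $d_{jn}$ and the full range $u_{j,1-n},\dots,u_{j,n-1}$ now in $L$, Proposition~\ref{pSL} applied to $(W_{j},W_{1}^{*})$ yields $c_{j1},\dots,c_{j,n-1}\in L$. Dually, for $2\leqslant k\leqslant d$ the relations $(\ref{T1})$, $(\ref{T2})$, $\dots$, $(\ref{Tn-1})$ push $v_{k,-1},\dots,v_{k,1-n}$ into $L$, after which Proposition~\ref{pSL} for $(W_{1},W_{k}^{*})$ delivers $d_{kn}^{*},c_{k1}^{*},\dots,c_{k,n-1}^{*}\in L$.

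I do not anticipate a genuine obstacle, since the entire scheme of Theorem~\ref{GL} transfers; the only step demanding care is the extraction of the top invariants $d_{jn}$ and $d_{kn}^{*}$. Here one must check, before dividing in $(\ref{big equation})$ and its $*$-image, that $d_{1n}^{*}$ (respectively $d_{1n}$) is already available in $L$, and that $c_{10},c_{10}^{*}\in L$ so that the leading coefficients of the recursions are invertible over $L$. Both facts are furnished by the first step, so the argument closes.
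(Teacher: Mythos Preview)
Your proposal is correct and is precisely the ``similar argument'' the paper alludes to: you rerun the proof of Theorem~\ref{GL} with $c_{j0}$ replaced by $d_{jn}$, substituting Proposition~\ref{pSL} for Proposition~\ref{pGL} and Corollary~\ref{prec}~(2) for Corollary~\ref{prec}~(1). The one genuine simplification you note---extracting $d_{jn}$ directly from~(\ref{big equation}) by dividing by $d_{1n}^{*}$ rather than passing through $(q-1)$-th powers---is exactly the expected benefit of working with $\SL$ instead of $\GL$.
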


\subsection{Generators for the invariant field $\F(\W)^{\U}$}

We define
\begin{eqnarray*}
D & := & \{f_{11},f_{12},f_{1s}^{*},u_{1i}:1\leqslant s\leqslant n-2, 2-n\leqslant i\leqslant 1\} \\
E &: = & \{f_{j1},u_{ji}:2\leqslant j\leqslant m,2-n\leqslant i\leqslant 0\}\\
F &:=& \{f_{k1}^{*},v_{ki}:2\leqslant k\leqslant d,0\leqslant i\leqslant n-2\}.
\end{eqnarray*}

\begin{thm}\label{UU}
\begin{enumerate}
  \item  For $n=1$, $\F(\W)^{\U}=\F(\W)$. 
  \item  For $n=2$, $\F(\W)^{\U}$ is generated by
  $$\{f_{11},f_{11}^{*},f_{12}^{*},u_{10}\}\cup\{f_{j1},u_{j0}:2\leqslant j\leqslant m\} \cup\{f_{k1}^{*},v_{k0}:2\leqslant k\leqslant d\}.$$
  \item For $n\geqslant 3$, $\F(\W)^{\U}$ is generated by $D\cup E\cup F$. 
\end{enumerate}
\end{thm}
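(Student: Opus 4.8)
The plan is to reduce everything to the one-vector--one-covector computations of Section~2 by applying Proposition~\ref{pU} to the pairs $(W_j,W_1^{*})$ and $(W_1,W_k^{*})$, and then to glue the pieces together by means of Corollary~\ref{prec}(3) and the Bonnaf\'e--Kemper relations $(R_1^{+}),(R_2),\dots$ Throughout set $L:=\F(D\cup E\cup F)$. Since $D\cup E\cup F\subseteq\F[\W]^{\U}$ we automatically have $L\subseteq\F(\W)^{\U}$, so the content is the reverse inclusion; by Corollary~\ref{prec}(3) it is enough to show that $f_{j1},\dots,f_{jn}$ and $u_{j0}$ (for $1\le j\le m$), $f_{k1}^{*},\dots,f_{kn}^{*}$ (for $1\le k\le d$), and $v_{k0}$ (for $2\le k\le d$) all lie in $L$. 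The cases $n=1$ and $n=2$ are disposed of first: for $n=1$ the group $\U$ is trivial, and for $n=2$ one applies Proposition~\ref{pU}(2) to each pair, using the single hypersurface relation to solve for the one missing generator ($f_{j2}$, respectively $f_{k2}^{*}$). So assume $n\ge 3$.

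First I would treat the distinguished pair $W_1\oplus W_1^{*}$. By inspection $D$ is precisely the generating set that Proposition~\ref{pU}(3) assigns to $W_1\oplus W_1^{*}$, so $\F(W_1\oplus W_1^{*})^{\U}=\F(D)\subseteq L$; in particular all of $f_{11},\dots,f_{1n}$ and $f_{11}^{*},\dots,f_{1n}^{*}$ lie in $L$. This is the key input that makes the remaining two steps cheap.

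Next, for each extra vector $W_j$ with $2\le j\le m$, Proposition~\ref{pU}(3) gives
$$\F(W_j\oplus W_1^{*})^{\U}=\F\bigl(f_{j1},f_{j2},f_{11}^{*},\dots,f_{1,n-2}^{*},u_{j1},u_{j0},\dots,u_{j,2-n}\bigr).$$
Here $f_{j1}$ and $u_{j0},\dots,u_{j,2-n}$ belong to $E$, and $f_{11}^{*},\dots,f_{1,n-2}^{*}$ belong to $L$ by the previous step, so the only generators not yet seen to be in $L$ are $u_{j1}$ and $f_{j2}$. These I recover from two relations: $(R_1^{+})$ for $(W_j,W_1^{*})$ exhibits $u_{j1}$ to the first power (with nonzero coefficient in the already-known $f_{1s}^{*}$) against $f_{j1}^{q}f_{1n}^{*}$ and the lower pairings, giving $u_{j1}\in L$; then $(R_2)$ exhibits $f_{j2}$, after division by the nonzero $f_{1,n-1}^{*}$, against the now-known pairings $u_{j,2-n},\dots,u_{j1}$, giving $f_{j2}\in L$. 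Hence $\F(W_j\oplus W_1^{*})^{\U}\subseteq L$, and in particular $f_{j1},\dots,f_{jn}\in\F[W_j]^{\U}\subseteq L$. The covector step is the mirror image: for $2\le k\le d$ one runs the same argument on $(W_1,W_k^{*})$ with the roles of vectors and covectors interchanged — this is legitimate because $\U$ is $*$-stable — now using that all of $f_{11},\dots,f_{1n}$ are available from the first step and that $v_{k0},\dots,v_{k,n-2}\in F$ supply the pairings, so that the $*$-images of $(R_1^{+})$ and $(R_2)$ recover the one missing pairing and $f_{k2}^{*}$, and hence $f_{k1}^{*},\dots,f_{kn}^{*}\in L$. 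Combining the three steps with $u_{10}=v_{10}\in D$, $u_{j0}\in E$, $v_{k0}\in F$ puts every generator of Corollary~\ref{prec}(3) into $L$, so $\F(\W)^{\U}=L$.

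I expect the main obstacle to be exactly the relation bookkeeping in the vector and covector steps: one must identify which Bonnaf\'e--Kemper relations to invoke, and in which order, so that at every stage the invariant being solved for occurs to the \emph{first} power with all other terms already in $L$. If an invariant can only be extracted to its $q$-th power, separability fails and the argument collapses; making the indices of the pairings $u_{ji}$ and $v_{ki}$ line up with the pairings demanded by the relevant form of Proposition~\ref{pU}(3) is precisely the delicate point, and it is where the asymmetry between the sets $E$ and $F$ must be reconciled. Once the two relations $(R_1^{+})$ and $(R_2)$ (and their $*$-images) are correctly pinned down, the field-containment argument closes each step automatically.
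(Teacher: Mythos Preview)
Your proposal is correct and follows essentially the same route as the paper: reduce via Corollary~\ref{prec}(3), handle the distinguished pair $W_1\oplus W_1^{*}$ by Proposition~\ref{pU}(3), and for each remaining $j$ (resp.\ $k$) use $(R_1^{+})$ to recover the single missing pairing $u_{j1}$ (resp.\ $v_{k,-1}$) to the first power, then $(R_2)$ to recover $f_{j2}$ (resp.\ $f_{k2}^{*}$). The order and the separability concern you flag are exactly the points the paper attends to, and your bookkeeping matches.
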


\begin{proof}  The proof of (1) is immediate since $\U$ is the trivial group.

For (2) let $L$ denote the field generated by $\{f_{11},f_{11}^{*},f_{12}^{*},u_{10}\}\cup\{f_{j1},u_{j0}:2\leqslant j\leqslant m\}\cup\{f_{k1}^{*},v_{k0}:2\leqslant k\leqslant d\}$ over $\F$.
By Corollary~\ref{prec} (3),
$\F(\W)^{\U}$ is generated by $\{f_{j1},f_{j2},u_{j0}:1\leqslant j\leqslant m\}\cup\{f_{k1}^{*},f_{k2}^{*}:1\leqslant k\leqslant d\}\cup\{v_{k0}:2\leqslant k\leqslant d\}$. It suffices to show that
\begin{align*}
\tag{$\ddag_{1}$} \label{dd1} f_{j2}\in L    &\textrm{  for }1\leqslant j\leqslant m \\
 \tag{$\ddag_{2}$} \label{dd2}  f_{k2}^{*}\in L &  \textrm{  for }2\leqslant k\leqslant d.
\end{align*}
For any $1\leqslant j\leqslant m$, by the first equality in Proposition~\ref{pU} (2), we see that
$\F(W_{j}\oplus W_{1}^{*})^{\textrm{U}(2,\F)}=\F(x_{j1},x_{j2},y_{11},y_{12})^{\textrm{U}(2,\F)}=\F[f_{j1},f_{11}^{*},f_{12}^{*},u_{j0}]$.
Thus $f_{j2}\in \F(W_{j}\oplus W_{1}^{*})^{\textrm{U}(2,\F)}\subseteq L$, which proves (\ref{dd1}).
To show (\ref{dd2}), for any $2\leqslant k\leqslant d$, we consider $$\F(W_{1}\oplus W_{k}^{*})^{\textrm{U}(2,\F)}=\F(x_{11},x_{12},y_{k1},y_{k2})^{\textrm{U}(2,\F)}=\F[f_{11},f_{k1}^{*},f_{12},v_{k0}],$$ by the second equality in Proposition~\ref{pU} (2). This together with
 (\ref{dd1}) implies that $f_{k2}^{*}\in \F(W_{1}\oplus W_{k}^{*})^{\textrm{U}(2,\F)}\subseteq L$ for all $2 \leq k \leq d$.

 To prove (3) we first note that Corollary~\ref{prec} (3) implies $\F(\W)^{U}$ is generated by
 $\{f_{j1},\dots,f_{jn},u_{j0}:1\leqslant j\leqslant m\}\cup\{f_{k1}^{*},\dots,f_{kn}^{*}:1\leqslant k\leqslant d\}\cup\{v_{k0}:2\leqslant k\leqslant d\}$.
Let $L$ be the subfield generated by $D\cup E\cup F$  over $\F$. Note that $L\subseteq \F(\W)^{U}$ and $|D|+|E|+|F|=(m+d)n$.
Thus it is sufficient to show:
 \begin{align*}
 \tag{$\dag_{1}$} \label{d1} f_{13},\dots,f_{1n},f_{1,n-1}^{*},f_{1n}^{*} \in L,  &\\
\tag{$\dag_{2}$} \label{d2} f_{j2},f_{j3},\dots,f_{jn}  \in L    ,&\quad \textrm{  for }2\leqslant j\leqslant m, \\
 \tag{$\dag_{3}$} \label{d3}  f_{k2}^{*},f_{k3}^{*},\dots,f_{kn}^{*} \in L ,& \quad \textrm{  for }2\leqslant k\leqslant d.
\end{align*}
 By the first equality of Proposition~\ref{pU} (3), $ \F(W_{1}\oplus W_{1}^{*})^{\U}=\F(D).$
Thus $$f_{13},\dots,f_{1n},f_{1,n-1}^{*}, f_{1,n}^{*}   \in  \F(D)\subseteq L$$ which proves (\ref{d1}).

 To show (\ref{d2}), consider the invariant field $\F(W_{j}\oplus W_{1}^{*})^{\U}$ for any  $2\leqslant j \leqslant m$.  By the first equality of Proposition~\ref{pU} (3) again, we have seen that $\F(W_{j}\oplus W_{1}^{*})^{\U}$  is generated by $$\{f_{j1},f_{j2},f_{11}^{*},f_{12}^{*},\dots,f_{1,n-2}^{*},u_{j1},u_{j0},u_{j,-1},\dots,u_{j,2-n}\}.$$
 Thus $\{f_{j2},\dots,f_{jn}\}\subseteq\F(W_{j}\oplus W_{1}^{*})^{\U}\subseteq L(f_{j2},u_{j1})$.
 Recall the relations (\ref{Rj1+}) and (\ref{Rj2}) in $\F[W_{j}\oplus W_{1}^{*}]^{\U}$:
 \begin{align*}
\tag{$(R_1^{+})_{j1}$} \label{Rj1+}  f_{j1}^{q}\cdot f_{1n}^{*}&= u_{j,2-n}^{q}+\alp_{j,3-n}\cdot u_{j,3-n}^{q}+\dots+ \alp_{j0}\cdot u_{j0}^{q}+\alp_{j1}\cdot u_{j1}, \\
 & \quad\textrm{where all } \alp_{jk}\in\F[f_{11}^{*},\dots,f_{1,n-1}^{*}]\textrm{ are not zero}.\\
\tag{$(R_2)_{j1}$} \label{Rj2}  f_{j2}\cdot f_{1,n-1}^{*}&= u_{j,2-n}+\alp_{j,3-n}\cdot u_{j,3-n}+\dots+ \alp_{j0}\cdot u_{j0}\\
 &\quad +u_{j,3-n}^{q}+\bet_{j,4-n}\cdot u_{j,4-n}^{q}+\dots+ \bet_{j0}\cdot u_{j,0}^{q}+ \bet_{j1}\cdot u_{j1}, \\
 &\quad  \textrm{where all } \alp_{jk},\bet_{jk}\in\F[f_{j1},f_{j1}^{*},\dots,f_{j,n-2}^{*}] \textrm{ are not zero}.
  \end{align*}
 Since $\alp_{j1}\neq 0$ in (\ref{Rj1+}), we have $u_{j1}\in L$. This fact, together with
 (\ref{Rj2}), implies that $f_{j2}\in L$. Therefore
 $L(f_{j2},u_{j1})= L$ and $\{f_{j2},\dots,f_{jn}\}\subseteq L$.

To show (\ref{d3}),
we consider  $\F(W_{1}\oplus W_{k}^{*})^{\U}$ for any $2\leqslant k \leqslant d$.
 By the second equality of Proposition~\ref{pU} (3), it follows that $\F(W_{1}\oplus W_{k}^{*})^{\U}$ is generated by
$$\{f_{k1}^{*},f_{k2}^{*},f_{11},f_{12},\dots,f_{1,n-2},v_{k,-1},v_{k0},v_{k,1},\dots,v_{k,n-2}\}.$$
 Since $f_{k2}^{*},f_{k3}^{*},\dots,f_{kn}^{*}\in \F(W_{1}\oplus W_{k}^{*})^{\U}$, it suffices to show
 $\F(W_{1}\oplus W_{k}^{*})^{\U}\subseteq L$.
 It follows from (\ref{d1}) that $f_{13},\dots,f_{1,n-2}\in L$. Since $f_{k1}^{*},f_{11},f_{12},v_{k0},v_{k,1},\dots,v_{k,n-2}\in L$,
 we need only to show that
 $$f_{k2}^{*},v_{k,-1}\in L.$$
 As in the above proof of (\ref{d2}), we now use $*((R_1^+)_{k1})$ and $*((R_2)_{k1})$ to
  deduce that $(v_{k,-1})$ and $f_{k2}^{*}$ lie in $L$.
 This completes the proof.
\end{proof}

\section*{Acknowledgments}
We thank Gregor Kemper for helpful comments on an earlier draft of this article.
The first author thanks Queen's University at Kingston for providing a comfortable working environment during his visit in 2014--2016. The first author was supported by the Fundamental Research Funds for the Central Universities (2412017FZ001), NNSF of China (11401087) and CSC (201406625007).
Both authors were partially supported by NSERC.


\end{document}